\documentclass[10pt,letterpaper]{article}
\usepackage[top=0.85in,left=2.75in,footskip=0.75in]{geometry}
\usepackage{amsmath,amssymb,mathtools,nicefrac}
\usepackage{changepage}
\usepackage{textcomp,marvosym}
\usepackage{cite}
\usepackage{nameref,hyperref}
\usepackage[nopatch=eqnum]{microtype}
\DisableLigatures[f]{encoding = *, family = * }
\usepackage[table]{xcolor}
\usepackage{array}
\usepackage{tikz,standalone,graphicx}
\newcolumntype{+}{!{\vrule width 2pt}}
\newlength\savedwidth

\usepackage{enumitem}

\raggedright
\usepackage[aboveskip=1pt,labelfont=bf,labelsep=period,justification=raggedright,singlelinecheck=off]{caption}

\makeatletter
\renewcommand{\@biblabel}[1]{\quad#1.}
\makeatother

\usepackage{empheq}

\usepackage{lastpage,fancyhdr,graphicx}
\usepackage{epstopdf}
\fancyheadoffset[L]{2.25in}
\fancyfootoffset[L]{2.25in}
\usepackage{amsthm}

\newtheorem{theorem}{Theorem}

\newtheorem{lemma}[theorem]{Lemma}

\newtheorem{assumption}[theorem]{Assumption}
\newtheorem{remark}[theorem]{Remark}

\newcommand{\R}{{\mathbb{R}}}

\DeclareFontFamily{U}{mathb}{\hyphenchar\font45}
\DeclareFontShape{U}{mathb}{m}{n}{
   <5> <6> <7> <8> <9> <10> gen * mathb
   <10.95> mathb10
   <12> <14.4> <17.28> <20.74> <24.88> mathb12
}{}
\DeclareSymbolFont{mathb}{U}{mathb}{m}{n}
\DeclareMathSymbol{\ggcurly}{3}{mathb}{"CF}
\DeclareMathSymbol{\llcurly}{3}{mathb}{"CE}

\let\succeq=\succcurlyeq
\let\preceq=\preccurlyeq
\DeclareMathOperator{\interior}{int}
\begin{document}

\vspace*{0.2in}

\begin{flushleft}
  {\Large
    \textbf\newline{A mathematical model for depression and resilience}
  }
  \newline
  \\

  Bj\"orn S.~R\"uffer\textsuperscript{1\Yinyang*},
  Michael~Sch\"onlein\textsuperscript{1\Yinyang}
  \\
  \bigskip
  \textbf{1} Chair of Applied Mathematics, Bauhaus-Universität~Weimar, Weimar, Germany
  \\
  \bigskip

  \Yinyang These authors contributed equally to this work.

  * E-mail: bjoern.rueffer@uni-weimar.de

\end{flushleft}

\section*{Abstract}
A basic dynamical model for (clinical) depression is presented to
describe the time evolution of two coupled states: a resilience level
and a depression symptom. The resilience level can also be interpreted
in terms of the memory of past symptoms. The model consists of a
system of two coupled first order differential equations without free
parameters that qualitatively captures different courses of illness,
without the overhead of a derivation from neuroscientific first
principles. A comprehensive mathematical analysis of the model is
provided, including equilibria, stability properties, and
monotonicity. The model can reproduce chronic, delayed, recovery, and
resilience scenarios that are prominent in the literature, as well as
scenarios such as improvement from pre-existing conditions, burnout
from low-grade adversity, or isolated and recurrent depressive
episodes. Supplementary to the manuscript are computational tools to
replicate the figures and, without prior mathematical or programming
skills, to experiment with the model interactively in the web browser
(at \texttt{https://rsmodel.org/}).

\clearpage
\newgeometry{top=0.85in,left=1in,right=1in,footskip=0.75in}

\section*{Introduction}

According to the World Health Organization (WHO), depression is a
common mental disorder affecting 5.7\% of the adult population and a
leading cause of disability worldwide, and can lead to suicide. The
WHO further estimates that over 700\,000 people die due to suicide
every year, with suicide being the third leading cause of death in
15--29~year-olds~\cite{world-health-organization2023-depressive-disorder-depression}. 

Existing mathematical models of depression can broadly be divided into
symptom network models, statistical prediction models, agent-based
simulations, biologically detailed mechanistic models, and dynamical
systems approaches. While dynamical systems have been proposed to
explain resilience, tipping points, and recovery, comparatively few
studies formulate analytically tractable low-dimensional ordinary
differential equation (ODE) models that explicitly describe the
reciprocal interaction between depressive symptoms and resilience.

In neuroscience and biology, research such as
\cite{byrumahearnkrishnan1999-a-neuroanatomic-model-for-depression,
  disnerbeevershaighbeck2011-neural-mechanisms-of-the-cognitive-model-of-depression},
features causality networks that show how signaling pathways and areas
in the brain influence each other, or how schemas, memories, triggers
and behaviors are (statically) coupled in different psychological
theories, while others put emphasis on more dynamic temporal aspects
of
depression~\cite{demiccheng2014-modeling-the-dynamics-of-disease-states-in-depression,
  chengdorsognachou2020-mathematical-modeling-of-depressive-disorders:-circadian-driving-bistability-and-dynamical-transitions,
  bothhoogendoornkleintreur2008-modeling-the-dynamics-of-mood-and-depression,
  tuckwellmiura1978-a-mathematical-model-for-spreading-cortical-depression}.

Following the idea of contemporary resilience research that relates a
decline in resilience to the accumulation of stress over
time, such as~\cite{liebrigottischafer2025-ausgebrannt-sein:-burnout-als-risikozustand,
  arnoldschilbachrigotti2023-paradigmen-der-psychologischen-resilienzforschung, galatzer-levyhuangbonanno2018-trajectories-of-resilience-and-dysfunction-following-potential-trauma:-a-review-and-statistical-evaluation, schaferkunzlerkalischtuscherlieb2022-trajectories-of-resilience-and-mental-distress-to-global-major-disruptions,bonanno2004-loss-trauma-and-human-resilience:-have-we-underestimated-the-human-capacity-to-thrive-after-extremely-aversive-events},
our model focuses on the interaction between depression symptoms
and resilience, modeling both of these as coupled dynamic quantities
in the most basic coupling topology. Our model is simple in terms of
its mathematical description, yet rich enough to exhibit behaviors
consistent with anecdotal observations about
the course of a depression, cf.~Fig~\ref{fig:scenarien}.
\begin{figure}[htb]
  \centering
  \includegraphics[width=0.8\linewidth]
  {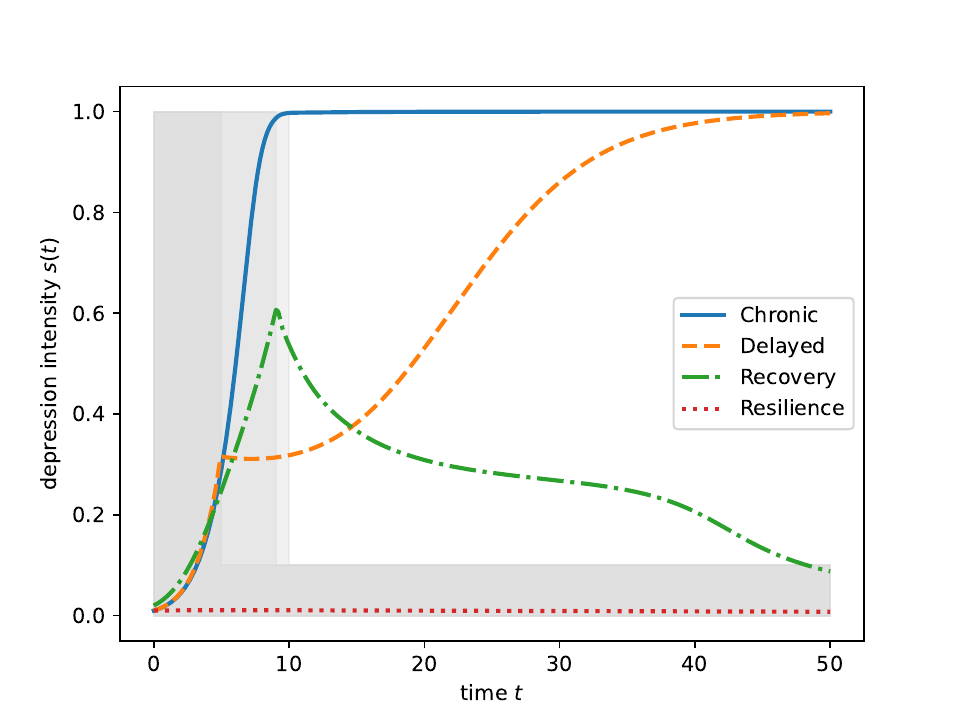}
  \caption{%
    \label{fig:scenarien}
    \textbf{Scenarios.}
    Characteristic courses of depression symptom $s(t)$ over
    time following external stress (shaded region). These quantitative
    trajectories are generated by our model and resemble the
    (qualitative) trajectories showcased in
    \cite{galatzer-levyhuangbonanno2018-trajectories-of-resilience-and-dysfunction-following-potential-trauma:-a-review-and-statistical-evaluation,
    schaferkunzlerkalischtuscherlieb2022-trajectories-of-resilience-and-mental-distress-to-global-major-disruptions,
    liebrigottischafer2025-ausgebrannt-sein:-burnout-als-risikozustand},
    which in turn are based on
    \cite{bonanno2004-loss-trauma-and-human-resilience:-have-we-underestimated-the-human-capacity-to-thrive-after-extremely-aversive-events}. The
    shaded area indicates adverse input levels, cf.\
    Figs~\ref{fig:chronic-scenario}--\ref{fig:resilience-scenario}
    for details. See Fig~\ref{fig:alternative-scenarios} for another
    version of this diagram resembling the figures in
    \cite{galatzer-levyhuangbonanno2018-trajectories-of-resilience-and-dysfunction-following-potential-trauma:-a-review-and-statistical-evaluation,
      schaferkunzlerkalischtuscherlieb2022-trajectories-of-resilience-and-mental-distress-to-global-major-disruptions,
      liebrigottischafer2025-ausgebrannt-sein:-burnout-als-risikozustand}
    even closer.}
\end{figure}
Loosely speaking, during stress the course of depression worsens
faster if a person is less resilient, while resilience levels deplete
over time the longer and the more severely a person remains
depressed. One state in our model represents the severity of
depression or another suitable symptom that is a good indicator of
depression. This is something that would commonly be measured by a
practitioner using a questionnaire, such as the Beck Depression
Inventory-Second Edition~(BDI-II), a widely used 21-item self-report
inventory measuring the severity of depression in adolescents and
adults~\cite{becksteerbrownothers1996-manual-for-the-beck-depression-inventory-ii},
or the Beck Hopelessness Scale~(BHS), an instrument for assessing
cognitive thoughts among suicidal
persons~\cite{becksteer1988-bhs-beck-hopelessness-scale:-manual}.
Alternatively, the first state could be a quantitative measure of the
level of a stress hormone often associated with depression, such as
cortisol, adrenocorticotropic hormone (ACTH), epinephrine,
norepinephrine, or corticotropin-releasing hormone (CRH).

The second state in our model measures a patient's level of resilience, with higher
levels of resilience being associated with a slower onset or adoption
of depression symptoms in response to external negative experiences or
stimuli. While this resilience state would commonly be unavailable to
direct measurement, it could be inferred from measurements of the
depression symptom and external inputs over time. As we will discuss
in more detail later, the resilience level can alternatively be
interpreted in terms of the memory of past depression experiences, or,
in other words, the accumulation of stress. A conceptual diagram of
the model is given in Fig~\ref{fig:blockdiagram}.
\begin{figure}[htb]
  \centering
  \includegraphics[width=.9999\linewidth]
  {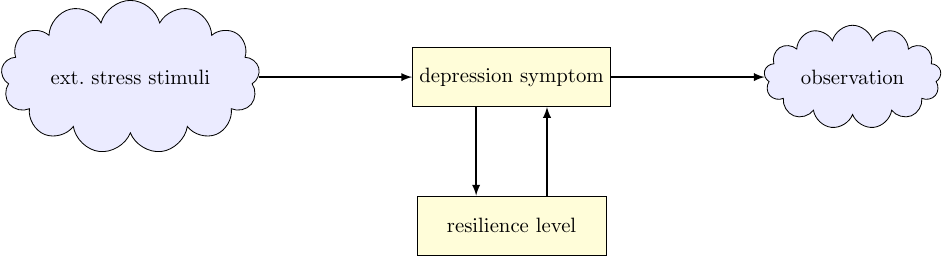}
  \vskip 1ex
  \caption{%
    \label{fig:blockdiagram}%
    \textbf{Block diagram.}
    In our model the depression symptom and resilience level
    are coupled dynamic quantities. Commonly only the depression level
    would be observed by the practitioner.}
\end{figure}

\section*{Mathematical model derivation and description}

In this section we present our mathematical model
describing the interplay between the symptom of a depression and the
resilience of a person, both as dynamic quantities, and in temporal
response to external negative influences.
In order to keep the model ``as simple as possible, but not simpler'',
we make a few assumptions. In the following the variable $t$
refers to time and belongs to a real time interval, usually
$\R_{+}\coloneqq [0,\infty)$.

\begin{assumption}
  At every time instant $t$,
  \begin{itemize}
  \item the \emph{resilience level} $r(t)$ is a value normalized to the
    interval $[0,1]$, with $0$ representing the lowest level of
    resilience (a person who is not resilient at all) and $1$
    representing the maximal level of resilience (a person who is as
    resilient as can be);
  \item the \emph{depression state} $s(t)$ is a value normalized to the
    interval $[0,1]$, with $0$ representing the absence of any
    depression symptoms and $1$ representing depression symptoms at
    their worst.
  \end{itemize}
\end{assumption}

Both of these normalizations merely serve to simplify the presentation. For
our purposes it is not essential how exactly these values are
interpreted, e.g., what ``$70\%$ depressed'' would actually
mean. What is essential is only that both are nonnegative quantities with a
continuous range of values.  There is no inherent restriction to this
approach since commonly used scales such as BDI-II or BHS naturally
have a bounded range of possible values that could easily be
normalized to the interval $[0,1]$ and be interpolated to a continuous
range. Or, vice versa, one could transform the model to a range
compatible with a measurement scale for depression and resilience.

Since depression is commonly associated with adverse events affecting
a person---and not with positive ones---we further assume the following.
\begin{assumption}
  There is an \emph{external stimulus} or \emph{influence} to the model
  denoted by $e=e(t)\geq 0$, with the convention that $e=0$ means
  absence of the stimulus and larger values mean more severe and
  pronounced forms of negative life events affecting the
  individual. This stimulus may change with time $t$.
\end{assumption}
We note that we do not assume that $e$ has a maximal possible
value. While such an assumption could be imposed without harm, it has
no impact on our model. We also note that our model does not aim at
capturing any forms of treatment or positive stimuli.

With this notation the evolution of the symptom is modeled by the
differential equation
  \begin{equation*}
    \frac{d}{dt} s(t) = \Big( e(t) \big(1+s(t)-r(t)\big) - s(t) r(t)   \Big) \big(1-s(t)\big) s(t),
  \end{equation*}
that describes the change rate (time derivative) of $s(t)$ in terms of
the current symptom state $s(t)$, the current resilience level $r(t)$
and the current external stimulus $e(t)$.

Before we discuss the model in more detail, we rewrite it in a
condensed form that is more common in the mathematical and
engineering sciences. In this notation the ``$(t)$'' is dropped and
$\frac{d}{dt} s(t)$ is replaced by the symbol $\dot s$, in order to
have the less cluttered representation
  \begin{equation}
    \label{eq:1-reduced}
    \dot s = \big( \underbrace{e (1+s-r)}_{\text{+}}  \underbrace{-\phantom{(}sr}_{\text{-}} \big) \underbrace{(1-s)~ s}_{\text{confining}}.
  \end{equation}
The second and third factors, $(1-s)$ and $s$, are simply there to
confine the solutions to the interval $[0,1]$. The growth term
$e (1+s-r)$ is always nonnegative. It models that the depression
symptom increases if there is an external influence (non-zero $e$),
and that this rate of change increases with the current symptom
severity $s$ and decreases with the resilience level $r$.  The
decay term $- s r$ is always nonpositive and models the attenuation
of the symptom. The more resilient a person is, the larger in
magnitude this attenuation (read: recovery) is.  The net change rate is
the combination of the growth and decay terms. Up to this point, the
model could be used with a static resilience level $r$, but it will be
more compelling to consider a dynamic resilience level, which we model
by
  \begin{equation}\label{eq:2}
    \dot r = \big((1-s)r - s \big) (1-r) r.
  \end{equation}
Again the second and third factors, $(1-r) r$, ensure that the
resilience remains confined to the interval $[0,1]$. The decay term in
the first factor is simply $-s$, the symptom severity, while the growth
term is $(1-s) r$, modeling that the increase of resilience is
proportional to how good a person feels, i.e., to $(1-s)$, and to their
current resilience $r$.

To write these coupled equations \eqref{eq:1-reduced}--\eqref{eq:2} in
even more compact form, we collect the states $r$ and $s$ into the tuple
$x \coloneqq (r,s)^T$ and obtain the following coupled system of
nonlinear ordinary differential equations
  \begin{equation}
    \begin{split}
      \dot r & = \big( (1-s)r - s \big) (1-r) r\\
      \dot s & =  \big( e (1+s-r) - s r   \big) (1-s) s ,
    \end{split}
    \rlap{\strut\hskip 3.5cm $\biggr\}$}
  \end{equation}
which can equivalently be stated as
  \begin{equation}
    \label{eq:3-RS}
    \dot x
    =  f(x, e) \coloneqq
    \begin{pmatrix}
      \big( (1-s)r - s \big) (1-r) r\\
      \big( e (1+s-r) - s r   \big) (1-s) s
    \end{pmatrix}.
  \end{equation}
Expanding the right-hand side $f$ shows that the
nonlinearity is given in terms of polynomials of total degree four
(and of degree at most three in each of the variables $r$ and $s$), which
allows the model to accommodate a rich set of dynamics as we shall
discuss in the sequel.

Specifically, we show that the model can reproduce the chronic,
delayed, recovery, and resilience scenarios depicted in
Fig~\ref{fig:scenarien}, and, in addition, it captures improvement
from pre-existing conditions as well as burnout from low-grade
adversity, and it can reproduce isolated and recurrent depressive
episodes.

We note that our model differs from other existing mathematical models
for depression.  For instance, the authors of
\cite{demiccheng2014-modeling-the-dynamics-of-disease-states-in-depression}
provide a one-dimensional model for the evolution of the symptom of a
depression, with the external input entering as a Gaussian noise term. Their
model is a polynomial of order three. Due to the single dimension, the
range of the possible dynamics is restricted to movement along a line.

The authors of
\cite{chengdorsognachou2020-mathematical-modeling-of-depressive-disorders:-circadian-driving-bistability-and-dynamical-transitions}
discuss coupled systems of differential equations that build upon HPA
axis models\footnote{HPA axis models are theoretical and computational
  frameworks used to understand how the hypothalamic-pituitary-adrenal
  (HPA) axis functions in both normal and pathological states,
  particularly in relation to stress and depression. HPA axis models
  attempt to explain the complex feedback mechanisms between
  the hypothalamus (which releases corticotropin-releasing hormone),
  the pituitary gland (which releases ACTH, the adrenocorticotropic
  hormone), and the adrenal cortex (which releases cortisol and other
  glucocorticoids).}, along which the effects of the circadian cycle
on input-driven transitions of the HPA axis are analyzed.  In
\cite{karin-et-al2020-new-hpa-model-dysregulation} a mathematical
model consisting of five coupled ODEs is presented, showing that HPA
axis dysregulation after prolonged stress is caused by hormones acting
as growth factors that increase gland masses over weeks, and that
strong glucocorticoid receptor feedback provides resilience by
limiting these mass changes and reducing subsequent hormone
dysregulation.  A comprehensive mathematical study with an even larger
number of states (i.e., number of coupled ordinary differential
equations) has been carried out in
\cite{bothhoogendoornkleintreur2008-modeling-the-dynamics-of-mood-and-depression},
while models based on partial differential equations (which could be
considered a system of infinitely many coupled ODEs) have been
studied in
\cite{tuckwellmiura1978-a-mathematical-model-for-spreading-cortical-depression}.
Other approaches such as in
\cite{kalischbakeral.2017-the-resilience-framework-as-a-strategy-to-combat-stress-related-disorders,
  schaferkunzlerkalischtuscherlieb2022-trajectories-of-resilience-and-mental-distress-to-global-major-disruptions}
consider multiple constituents of resilience, resulting also in more
complex models.

In comparison to our model, it is worth noting that these biochemical
approaches do not take resilience as a state variable into
account. In contrast, our model has just two states, \emph{resilience} and
\emph{depression symptom}, and it deals with the interplay of the states qualitatively. As a
result, our model allows for non-trivial dynamics to reproduce the
typical evolutions of depression described in the literature,
cf.\ Fig~\ref{fig:scenarien}. Moreover, we note that our model is
purely deterministic.

This manuscript is organized as follows. The next section provides a
detailed analysis of stability and monotonicity properties of the
resilience--symptom model. The subsequent section motivates an
alternative interpretation of resilience via memory of past
depression. This is followed by an extensive discussion of characteristic
courses of depression that qualitatively reproduce various scenarios
from the literature and match the authors' own expectations grounded in
anecdotal evidence. Following this, we provide resources to both
reproduce the numerical examples and---with no mathematical or
programming skills required---perform model-based simulations in the web
browser. Finally, we provide a brief summary and an outlook for
possible further developments.

\section*{Analysis and properties of the model}

In this section we provide a detailed mathematical analysis of the
resilience--symptom model~\eqref{eq:3-RS}. Specifically, we compute all
relevant equilibrium points, assess their local stability properties
and regions of attraction, and discuss monotonicity properties of the
model.

\subsection*{Monotonicity}

We start with the investigation of monotonicity properties, since
monotonicity in turn is helpful in establishing stability
properties. Mathematically speaking, monotonicity refers to preservation of order. We shall show that the resilience--symptom model has an
inherent structure, which allows us to compare states of the
model. More precisely, it is reasonable to say that a person with
state $(r_1,s_1)$ is \emph{doing no worse than} a person with state
$(r_2,s_2)$, if both $r_1 \geq r_2$ and $s_1 \leq s_2$. In this case
we will write $(r_1,s_1) \succeq (r_2,s_2)$. In other words, a person
does no worse than another person, if the first person currently has
at least the same level of resilience and at most  as pronounced a
symptom of depression.

Mathematically, the relation $\succeq$ defines a partial ordering,
which can also be stated in terms of the \emph{southeast orthant}
  $$
  K\coloneqq
  \left\{
    \begin{pmatrix}
      x_{1}\\ x_{2}
    \end{pmatrix}
    \in \R^{2} \colon
    x_{1} \geq 0 \text{ and } x_{2}\leq 0
  \right\}.
  $$
That is, in this notation we have 
 \begin{align}\label{eq:def-state-ordering}
  \begin{pmatrix}
    r_{1}\\
    s_{1}
  \end{pmatrix}
  \succeq
  \begin{pmatrix}
    r_{2}\\
    s_{2}
  \end{pmatrix}
  \quad :\!\Longleftrightarrow \quad
  \begin{pmatrix}
    r_{1}\\
    s_{1}
  \end{pmatrix}
  -
  \begin{pmatrix}
    r_{2}\\
    s_{2}
  \end{pmatrix}
  = 
  \begin{pmatrix}
    r_{1}-r_{2}\\
    s_{1}-s_{2}
  \end{pmatrix}
  \in K. 
 \end{align}
The order symbol $\preceq$ is introduced in the obvious way, while
$x\succ y$ is always defined by the simultaneous conditions
$x\succeq y$ and $x\ne y$, and the relation $x\ggcurly y$ is defined
by $x-y\in \interior K$, the interior of $K$. In other words,
$$
  \begin{pmatrix}
    r_{1}\\
    s_{1}
  \end{pmatrix}
  \ggcurly
  \begin{pmatrix}
    r_{2}\\
    s_{2}
  \end{pmatrix}
  \quad \iff \quad
  \begin{pmatrix}
    r_{1}\\
    s_{1}
  \end{pmatrix}
  -
  \begin{pmatrix}
    r_{2}\\
    s_{2}
  \end{pmatrix}
  = 
  \begin{pmatrix}
    r_{1}-r_{2}\\
    s_{1}-s_{2}
  \end{pmatrix}
  \in \interior K 
  \quad \iff \quad
  [r_{1}>r_{2} \quad\&\quad s_{1}<s_{2}].
$$

Since the model under consideration is two-dimensional, we limit the
presentation to $\R^2$, although the theory of monotone systems is far
more developed, cf.~\cite{smith1995-monotone-dynamical-systems}. Next,
we briefly outline what monotone systems are, while restricting the
presentation to the setting of this paper. For the general case, we
refer to \cite{angelisontag2003-monotone-control-systems}.

The function $f$ that defines the first-order ordinary differential
equation in \eqref{eq:3-RS} has continuous partial
derivatives\footnote{In fact, the function $f$ has a natural domain of
  definition that contains an open set containing the domain
  $[0,1]^{2}$, so we may consider derivatives on the boundary of
  $[0,1]^{2}$.} with respect to all variables $x=(r,s)^{T}$ and $e$. Therefore,
for each initial condition
  $$
  \begin{pmatrix}
    r(0)\\ s(0)
  \end{pmatrix}
  =
  \begin{pmatrix}
    r_{0}\\ s_{0}
  \end{pmatrix}
  \in [0,1]^{2}
  $$
and locally Lebesgue integrable input function
$e\colon [0,\infty) \to \R_{+}$, there exists a unique solution,
  $$
  \varphi\colon
  [0,\infty)\to[0,1]^{2},\quad
  t\mapsto
  \begin{pmatrix}
    r(t)\\
    s(t)
  \end{pmatrix}
  $$
which we denote by $\varphi(t) = \varphi\big(t,(r_0,s_0,e)\big)$, and which satisfies
  \begin{align*}
    \varphi\big(0,(r_0,s_0,e)\big) &= (r_{0},s_{0})^{T}\\
    \intertext{and, for all $t\geq 0$,}
    \frac{d}{dt}\varphi\big(t,(r_0,s_0,e)\big) &= f\Big(\varphi\big(t,(r_0,s_0,e)\big), e(t)\Big).
  \end{align*}

To define a partial order for the inputs
$e_{1}, e_{2}\colon [0,\infty) \to \R_{+}$, we say that an external
stimulus $e_{1}$ is \emph{not worse} (for a patient) than stimulus
$e_{2}$, if at every time $t$ it takes at most the same value. In
formulas, we write\footnote{To be more general, one could replace
  the ``for all $t$'' in the order relation definitions for the inputs by a
  ``for almost all $t$'' in the sense of Lebesgue measure.}
 \begin{align}\label{eq:def-input-ordering}
  e_1 \succeq e_2 \qquad :\!\Longleftrightarrow\qquad \Big[ e_1(t) - e_2(t) \leq 0  \text{ for all } t\geq 0 \Big].
 \end{align}
Again, the corresponding induced partial order symbols are defined as
before, and we note that there is no confusion arising from using the
same symbol for the order relations on the input and the state spaces,
as it is always clear from the context which order relation is meant.
The cone corresponding to the partial order on the inputs is thus
given by
  $$
  C = \{ e\colon[0,\infty) \to \R\colon e(t)\leq 0\text{ for all }
  t\geq 0 \}.
  $$
With regard to these partial order relations, system~\eqref{eq:3-RS} is said to be
\emph{monotone}, if for all
  $$
  \begin{pmatrix}
    r_{1}\\
    s_{1}
  \end{pmatrix}
  \succeq
  \begin{pmatrix}
    r_{2}\\
    s_{2}
  \end{pmatrix}
  $$
and $e_1 \succeq e_2$ one has
  $$
  \varphi\big(t,(r_1,s_1,e_1)\big) \succeq
  \varphi\big(t,(r_2,s_2,e_2)\big) \quad \text{ for all } t \geq 0.
  $$
Since $f$ in~\eqref{eq:3-RS} is continuously differentiable,
monotonicity of~\eqref{eq:3-RS} can be verified by the extended
\emph{Kamke condition},
cf.~\cite[Cor.~III.3]{angelisontag2003-monotone-control-systems},
which is as follows.  System~\eqref{eq:3-RS} is monotone if and only if
  \begin{align*}
     (-1) \frac{\partial f_1}{\partial s} (r,s,e) \geq 0
     \quad \text{ and } \quad
     (-1) \frac{\partial f_2}{\partial r} (r,s,e) \geq 0 
  \end{align*}
for all $(r,s) \in [0,1] \times [0,1]$ and for all $ e \geq 0$, and 
  \begin{align*}
     (-1) \frac{\partial f_1}{\partial e} (r,s,e) \geq 0
     \quad \text{ and } \quad
     \frac{\partial f_2}{\partial e} (r,s,e) \geq 0         
 \end{align*}
for all $(r,s) \in [0,1] \times [0,1]$ and for all $ e \geq 0$. 

Indeed, by direct computation, we obtain
  \begin{align*}
     (-1) \frac{\partial f_1}{\partial s} (r,s,e) = r  (1-r) (1+r) \geq 0 
     \quad \text{ and } \quad
       (-1) \frac{\partial f_2}{\partial r} (r,s,e) =s (1-s) \big(e+ s \big) \geq 0      
  \end{align*}
for all $(r,s) \in [0,1] \times [0,1]$ and for all $ e \geq 0$.  Moreover, the inequalities are strict in the interior of the unit square. Furthermore, one has
  \begin{align*}
    (-1) \frac{\partial f_1}{\partial e} (r,s,e) = 0
     \quad \text{ and } \quad
    \frac{\partial f_2}{\partial e} (r,s,e) = s(1-s) \, (1+s-r)\geq 0
     \end{align*}
for all $(r,s) \in [0,1] \times [0,1]$ and for all $ e \geq 0$.  For
future reference, we summarize the above considerations in the
following statement.

\begin{theorem}
  \label{thm:monotonicity-RS}
  Model~\eqref{eq:3-RS} is monotone with respect to the partial orderings
  \eqref{eq:def-state-ordering} and \eqref{eq:def-input-ordering}.
\end{theorem}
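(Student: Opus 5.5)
The plan is to verify the extended Kamke condition from \cite[Cor.~III.3]{angelisontag2003-monotone-control-systems} for the cone $K$ on the state space and the cone $C$ on the input space. That corollary applies because $f$ is continuously differentiable on an open neighborhood of $[0,1]^2\times\R_+$.

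First, I reduce the ordering to the standard one. Under the coordinate change $(r,s)\mapsto(r,-s)$, the southeast orthant $K$ becomes the nonnegative orthant. Likewise, flipping the sign of the input turns the cone $C$ into the cone of nonnegative functions. In these coordinates the Kamke condition is the usual one. Off-diagonal entries of the Jacobian must be nonnegative (cooperativity), and $\partial_e$ of each transformed component must have a fixed sign consistent with the input order. Translating back, this is exactly the list of sign conditions stated before the theorem: $-\partial f_1/\partial s\ge0$, $-\partial f_2/\partial r\ge0$, $-\partial f_1/\partial e\ge 0$ and $\partial f_2/\partial e\ge0$.

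Next, I check these signs by direct differentiation on $[0,1]^2\times\R_+$.

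For $\partial f_1/\partial s$, write $f_1=\big(r-s(1+r)\big)(1-r)r$. This gives $-\partial_s f_1=(1+r)(1-r)r\ge0$.

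For $\partial f_2/\partial r$, the factor $(1-s)s$ does not depend on $r$. We get $-\partial_r f_2=(e+s)(1-s)s\ge0$.

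For the input derivatives, $\partial_e f_1=0$. Also $\partial_e f_2=(1+s-r)(1-s)s\ge0$, since $1+s-r\ge s\ge0$ on the square.

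All these factors are visibly nonnegative there.

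The remaining point, which I expect to be the only subtle step, is that the Kamke criterion needs convexity of the state domain and forward invariance. The domain $[0,1]^2$ is convex. Forward invariance holds because $f_1$ vanishes on $r\in\{0,1\}$ and $f_2$ vanishes on $s\in\{0,1\}$, so the edges of the square are invariant. This guarantees that solutions with initial data in $[0,1]^2$ exist for all $t\ge0$ and stay in the square.

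Inputs are only locally integrable, so the input derivative is understood pointwise in $e$. The corollary handles this measurable time-dependence; alternatively, one approximates by continuous inputs and uses continuous dependence.

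With the hypotheses verified, \cite[Cor.~III.3]{angelisontag2003-monotone-control-systems} yields $\varphi(t,(r_1,s_1,e_1))\succeq\varphi(t,(r_2,s_2,e_2))$ for all $t\ge0$, which proves the theorem.
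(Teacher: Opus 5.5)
Your proposal is correct and follows the paper's proof: both verify the extended Kamke condition of \cite[Cor.~III.3]{angelisontag2003-monotone-control-systems} by computing the same four derivatives, $-\partial_s f_1 = r(1-r)(1+r)$, $-\partial_r f_2 = s(1-s)(e+s)$, $\partial_e f_1 = 0$ and $\partial_e f_2 = s(1-s)(1+s-r)$, and checking their signs on the square. Your additional step, proving forward invariance of $[0,1]^2$ directly from the confining factors $r(1-r)$ and $s(1-s)$ (invariant lines plus uniqueness), is a useful refinement: the paper assumes solutions stay in the square and proves invariance only afterwards, in Lemma~\ref{lemma:invariance-of-unit-square}, as a consequence of the monotonicity being established here.
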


\subsection*{Equilibria and stability}

Instead of recalling the rigorous definitions of stability and
attractivity of an equilibrium (in the sense of Lyapunov), we just
give a loose description. For a comprehensive introduction to
stability theory for differential equations we refer the interested
reader to \cite[Ch.~4]{khalil2002-nonlinear-systems} and
\cite[Ch.~3]{hinrichsenpritchard2005-mathematical-systems-theoryi---modelling-state-space-analysis-stability-and-robustness}. Generally
speaking, an equilibrium is \emph{stable} if trajectories starting
nearby remain nearby for all future time.  An equilibrium is
\emph{attractive} if trajectories that start sufficiently close to
the equilibrium converge to the equilibrium as time goes to
infinity. An equilibrium is \emph{asymptotically stable} if it is
stable and attractive.

We begin our stability analysis by stating the equilibria of the
resilience--symptom model.  To this end we consider only states
$(r,s) \in [0,1]\times [0,1]$. Treating the external input
$e\geq 0$ as a fixed parameter, we find the following list of
equilibrium points for model~\eqref{eq:3-RS}
  \begin{gather*}
     p_1 = \binom{0}{0},\quad
     p_2 = \binom{1}{0},\quad
     p_3 = \binom{1}{1},\quad
     p_4 = \binom{0}{1}, \\
     p_5(e)=
     \begin{pmatrix}
       \frac{e + \sqrt{5 \, e^{2} + 4 \, e}}{2 \, {\left(e + 1\right)}}\\
       \frac{\sqrt{5 \, e^{2} + 4 \, e} - e}{2 \, {\left(e + 1\right)}}
     \end{pmatrix} \text{ for every } e\in[0,1],       
     \\
     P_{6} = \left\{\binom{0}{s}\colon s\in[0,1]\right\} \text{ only for }e=0, \rlap{\text{ and}}\\
     P_{7} = \left\{\binom{1}{s}\colon s\in[0,1]\right\} \text{ only for }e=1,
   \end{gather*}
where $p_5$ is within the unit square if and only if $e\in[0,1]$.
We also note that $p_5(0)=p_1$ and $p_5(1)=\binom{1}{\frac{1}{2}}$, in
other words, $p_{5}$ describes a continuous arc of singleton
equilibria, parametrized by the input, while for the specific values
$e=0$ and $e=1$, respectively, the sets $P_{6}$ and $P_{7}$ are
continua of equilibrium points.

The equilibria $p_1$, $p_2$, $p_3$, and $p_4$ are the corners of the unit
square. We will refer to $p_2$ as \emph{bliss}, and $p_4$ as
\emph{despair}, cf.\ Fig~\ref{fig:equilibria}.

\begin{figure}[htb]
  \centering
  \includegraphics[width=0.4\linewidth]{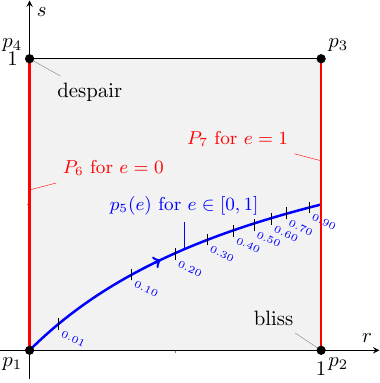}
  \caption{%
    \label{fig:equilibria}%
    \textbf{Equilibria of system~\eqref{eq:3-RS}.}  The obvious
    equilibria of model~\eqref{eq:3-RS} are in the corners of the
    state space. Along the curve shown in blue (from left to right
    with increasing $e$) are equilibrium points for fixed $e\in[0,1]$.
    The red boundaries (left and right) of the unit square are
    equilibrium sets (continua) for $e=0$ and $e=1$, respectively.
    \emph{Bliss} refers to the hypothetical state of perfect
    resilience and no depression symptoms, while \emph{despair} refers
    to the situation of `complete' depression and no resilience.}
\end{figure}

It is an artifact of the design choices in this model that the points
$p_1, p_2, p_3,$ and $p_4$ are equilibria irrespective of the value
of $e$, meaning that an individual who has, say, zero symptoms and
resilience will always remain this way, unfazed by any external
stimuli. For practical purposes, however, it is more realistic
to consider whether the states asymptotically converge to
zero, or remain close to zero, which is a question of (asymptotic)
stability. Put differently, no living individual is
likely ever to be found in states $p_{1}$, $p_{2}$, $p_{3}$, or $p_{4}$, as
everyone will have had some negative experiences in their
lives.

\begin{figure}[htb]
  \centering
  \includegraphics[width=0.4\linewidth]{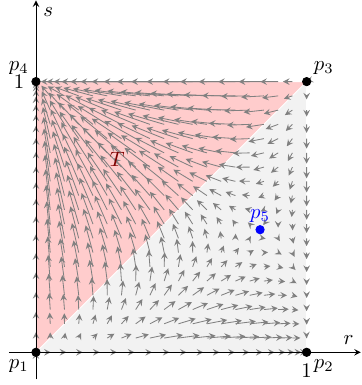}
  \caption{%
    \label{fig:quiver}%
    \textbf{A vector field (quiver) plot for $f$ in \eqref{eq:3-RS}
      with a constant value of $e=0.6$.} Note that for a fixed value of
    the input $e\in(0,1)$ there does not exist a continuum of
    equilibria, as the previous figure may suggest: The equilibria in
    $[0,1]^{2}$ are all singletons. The region shaded in red is the
    invariant region $T$ from
    Theorem~\ref{thm:stability}\ref{item:RS-upper-triangle-invariance-and-ROA}.
  }
\end{figure}

Before we turn to stability considerations, we have to clarify why it
is permissible to restrict the state-space of system~\eqref{eq:3-RS}
to $[0,1]^{2}$, which we will do implicitly henceforth, that is, we
will always consider stability relative to this state-space only.

\begin{lemma}
  \label{lemma:invariance-of-unit-square}
  The unit square $[0,1]^{2}$ is positively invariant
  under~\eqref{eq:3-RS}. That is, trajectories starting in $[0,1]^{2}$
  remain in this set for all future time. 
\end{lemma}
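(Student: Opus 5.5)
The key observation is that each right-hand side carries a factor vanishing on the relevant boundary lines: $\dot r$ contains the factor $(1-r)r$, and $\dot s$ contains the factor $(1-s)s$. The lines $r=0$, $r=1$, $s=0$, $s=1$ are therefore invariant for \eqref{eq:3-RS}. Uniqueness of solutions then forbids any trajectory from crossing them. The plan is to make this precise one coordinate at a time, treating the other coordinate and the input as given time-varying coefficients.

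Fix an initial condition $(r_0,s_0)\in[0,1]^2$ and a locally integrable input $e\geq 0$. Let $(r(t),s(t))$ be the (local) solution on its maximal interval of existence $[0,T)$. Define
$$a(t)\coloneqq \big((1-s(t))r(t)-s(t)\big)\big(1-r(t)\big),$$
so that $\dot r = a(t)\,r$. This is a scalar linear ODE with a continuous coefficient, so
$$r(t)=r_0\exp\Big(\int_0^t a(\tau)\,d\tau\Big),$$
and $r(t)\geq 0$ on $[0,T)$, with $r\equiv 0$ if $r_0=0$.

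For the upper bound, apply the same trick to $1-r$. Writing $\dot{(1-r)} = -\big((1-s)r-s\big)r\,(1-r) = b(t)(1-r)$ gives
$$1-r(t)=(1-r_0)\exp\Big(\int_0^t b\Big)\geq 0.$$
The second component is handled identically. Here $\dot s = c(t)s$ with $c(t)=\big(e(t)(1+s-r)-sr\big)(1-s)$, and $\dot{(1-s)}=d(t)(1-s)$. The only point needing care is that $c$ is merely locally integrable rather than continuous, because of $e$. The exponential representation still holds for absolutely continuous (Carathéodory) solutions, so this causes no difficulty.

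It follows that the solution stays in $[0,1]^2$ on its whole interval of existence. Since the right-hand side is bounded on this compact set by $C(1+|e(t)|)$, which is locally integrable, the solution cannot blow up in finite time. Hence $T=\infty$, which also justifies the global solution map $\varphi$ used earlier.

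The main obstacle is justifying the exponential representation in the Carathéodory setting, i.e., that $t\mapsto r(t)\exp\big(-\int_0^t a\big)$ has zero derivative almost everywhere and is absolutely continuous, hence constant. This is a routine product-rule argument for absolutely continuous functions. An alternative route is to note that the boundary lines are themselves solutions and invoke uniqueness, but the exponential formula avoids having to argue about first exit times.
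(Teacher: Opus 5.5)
Your argument is correct, but it takes a different route from the paper. The paper gives a one-line sandwich argument based on Theorem~\ref{thm:monotonicity-RS}. The corners \emph{despair} $(0,1)^T$ and \emph{bliss} $(1,0)^T$ are equilibria for every input, and they are the least and greatest elements of $[0,1]^2$ in the southeast order. So every trajectory from $(r_0,s_0)\in[0,1]^2$ satisfies $(0,1)^T\preceq\varphi(t,(r_0,s_0,e))\preceq(1,0)^T$, which says exactly $0\le r(t)\le 1$ and $0\le s(t)\le 1$. You instead work coordinatewise: you use the confining factors $r(1-r)$ and $s(1-s)$ to write each of $r$, $1-r$, $s$, $1-s$ as its initial value times a positive exponential, and then rule out finite-time blow-up.

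The paper's version is shorter and highlights the order structure that is used again later (ordered trajectories, $\omega$-limit sets consisting of equilibria). However, it relies on the Kamke sign conditions, which were checked only on $[0,1]^2$ and in fact fail just outside it; for instance $\partial f_1/\partial s=-(1+r)(1-r)r>0$ for $r>1$. It also uses a solution map $\varphi$ that was already declared to take values in $[0,1]^2$. So, as written, it tacitly presupposes the invariance it is proving.

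Your proof avoids this. It is self-contained, handles merely locally integrable inputs in the Carath\'eodory sense, and makes the no-blow-up step explicit. It also gives more: $r(t)$, $1-r(t)$, $s(t)$, $1-s(t)$ either vanish identically or stay strictly positive, according to their initial values. Hence the open square and each edge are separately invariant, facts the paper checks ad hoc later (the edge $s=0$ in part (a) and the edge $r=1$ in part (c) of Theorem~\ref{thm:stability}).
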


\begin{proof}
  The conclusion is a consequence of monotonicity: For all
  $(r_{0},s_{0})\in[0,1]^{2}$, all $e\colon[0,\infty)\to\R_{+}$,
  and all $t\geq0$ one has
  $$
  \text{\emph{despair}} = 
  \varphi\big(t,(\text{\emph{despair}},e)\big) \preceq
  \varphi\big(t,((r_{0},s_{0}),e)\big) \preceq 
  \varphi\big(t,(\text{\emph{bliss}},e)\big) = \text{\emph{bliss}}
  $$
  by Theorem~\ref{thm:monotonicity-RS}. Thus, the trajectory cannot
  escape $[0,1]^{2}$ and hence exists for all future time.  
\end{proof}

The standard method to tackle stability properties of an equilibrium
of a nonlinear differential equation is to consider the linearization
around it. The linearizations are given by the Jacobian matrices of
$f$ evaluated at the corresponding equilibria. However, this method
cannot always be applied, as we will see in the subsequent stability
analysis. The following result summarizes the stability properties of
the equilibria of model~\eqref{eq:3-RS}.
\begin{theorem}\label{thm:stability}
  For model~\eqref{eq:3-RS} and fixed values of $e$, it holds that
  \begin{enumerate}[label=(\alph*), ref=(\alph*)]
  \item\label{item:RS-stability-p1-p3} the equilibria $p_{1}$ and $p_{3}$ are unstable for
    $e\geq 0$;
  \item\label{item:RS-instability-p5} for $e\in(0,1)$ the equilibrium $p_{5}(e)$ is unstable;
  \item\label{item:RS-stability-bliss} the equilibrium $p_{2}$ \emph{(bliss)} is stable
    if $e=1$; locally asymptotically stable for all
    $e\in [0,1)$; and \emph{unstable} for $e>1$;
  \item\label{item:RS-AS-despair} the equilibrium $p_{4}$ \emph{(despair)} is locally
    asymptotically stable for all $e>0$ and stable for $e=0$;
  \item\label{item:RS-upper-triangle-invariance-and-ROA} the upper
    left triangle $T= \{(r,s)\in[0,1]^{2}\colon s\geq r\}$ is
    positively invariant under \eqref{eq:3-RS} for all $e\geq0$;
    moreover, if $e>0$, then $T\setminus\{p_{1},p_{3}\}$ is contained
    in the domain of attraction for $p_{4}$ (despair).
  \end{enumerate}
\end{theorem}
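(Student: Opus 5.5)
The plan is to treat each item in turn. Wherever the Jacobian is hyperbolic we linearize; at the degenerate cases we fall back on the monotonicity of Theorem~\ref{thm:monotonicity-RS} and on sign arguments along invariant edges.

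\emph{Jacobians at the corners.} Write $f_1=g_1(r,s)(1-r)r$ and $f_2=g_2(r,s,e)(1-s)s$, with
\[
g_1=(1-s)r-s, \qquad g_2=e(1+s-r)-sr .
\]
At a corner, $\partial_r f_1=g_1\,\partial_r[(1-r)r]$ and $\partial_s f_1=0$, since $(1-r)r=0$ there. Similarly $\partial_r f_2=0$ and $\partial_s f_2=g_2\,\partial_s[(1-s)s]$. So every Jacobian at a corner is diagonal.

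\begin{itemize}
\item At $p_1$: the eigenvalues are $g_1(0,0)=0$ and $g_2(0,0)=e$.
\item At $p_2$: they are $-g_1(1,0)=-1$ and $g_2(1,0)=e$.
\item At $p_3$: they are $-g_1(1,1)=1$ and $-g_2(1,1)=-(e-1)$.
\item At $p_4$: they are $g_1(0,1)=-1$ and $-g_2(0,1)=-2e$.
\end{itemize}

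Linearization then settles several cases directly. It gives (a) for $p_3$, since the eigenvalue $1>0$. It gives (c) for $e\in[0,1)$ (eigenvalues $-1$ and $e-1<0$, wait: at $p_2$ they are $-1$ and $e$, so this needs care; see below) and for $e>1$. It gives (d) for $e>0$, with eigenvalues $-1$ and $-2e$.

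The degenerate cases are handled by hand, on the invariant edges.
\begin{itemize}
\item \emph{$p_1$.} On the edge $r=0$ we have $\dot s=e(1+s)(1-s)s$, so for $e>0$ trajectories leave $p_1$ along $s$. On the edge $s=0$ we have $\dot r=r^2(1-r)>0$, so trajectories leave along $r$ for every $e\ge0$. Hence $p_1$ is unstable in all cases.
\item \emph{$p_4$ at $e=0$.} Here $P_6$ consists of equilibria. Stability follows from monotonicity: a point near $p_4$ is squeezed between $p_4$ and a nearby point whose trajectory stays nearby. Alternatively, for $e=0$ we have $\dot s=-sr(1-s)s\le0$ and $\dot r\le0$ while $s\ge r/(1+r)$, and a direct sublevel argument finishes it.
\end{itemize}

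\emph{Bliss.} I need to recheck the eigenvalue at $p_2$ carefully: $\partial_s f_2$ at $s=0$ equals $g_2(1,0,e)=e\cdot 0-0=0$. So the Jacobian at bliss is $\operatorname{diag}(-1,0)$, and the situation is degenerate for every $e$. I would therefore work with the reduced dynamics near $p_2$, setting $r=1-\rho$ with $\rho,s$ small:
\[
\dot s\approx\big(e(s+\rho)-s\big)s .
\]
On the invariant edge $r=1$ this is $\dot s=(e-1)s^2(1-s)$. That edge is a centre manifold whose one-dimensional flow is attracting for $e<1$, repelling for $e>1$, and made of equilibria ($P_7$) for $e=1$. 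By centre manifold reduction, using that $\rho$ decays exponentially because of the eigenvalue $-1$, we get local asymptotic stability for $e<1$ and instability for $e>1$. For $e=1$ we get stability but not attractivity; monotonicity helps here by bounding trajectories between points of $P_7$ and the fast $r$-dynamics.

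The same degeneracy issue arises at $p_3$ when $e=1$, but the eigenvalue $+1$ already forces instability.

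\emph{$p_5(e)$.} I would compute the Jacobian there, where $g_1=g_2=0$. This gives
\[
J=\begin{pmatrix}\partial_r g_1\,(1-r)r & \partial_s g_1\,(1-r)r\\ \partial_r g_2\,(1-s)s & \partial_s g_2\,(1-s)s\end{pmatrix}.
\]
The off-diagonal entries are negative, by the Kamke computation. I would show $\det J<0$, so that $J$ is a saddle. This reduces to
\[
\partial_r g_1\,\partial_s g_2-\partial_s g_1\,\partial_r g_2<0,
\]
where $\partial_r g_1=1-s$, $\partial_s g_1=-(1+r)$, $\partial_r g_2=-(e+s)$ and $\partial_s g_2=e-r$. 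The condition becomes
\[
(1-s)(e-r)<(1+r)(e+s).
\]
This holds automatically when $e\le r$. Along $p_5$ we have $r\ge e$, which I would verify from the formula. This gives (b), and I expect this determinant check to be the main computation.

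\emph{Triangle $T$.} On the diagonal $s=r$, we have $\dot r=-r^2(1-r)r\le0$ and $\dot s=(e-r^2)(1-r)r$... The cleaner route is monotonicity: order-comparing with the line gives nothing directly. So I would instead compute
\[
\frac{d}{dt}(s-r)\big|_{s=r}=\big(e+r^2\big)(1-r)r\ge0,
\]
since $g_2=e-r^2$ and $-g_1=r^2$ there, which yields $er(1-r)+\dots$. This gives invariance by the standard tangency (Nagumo) criterion. In $T$ we have $g_1=(1-s)r-s\le r-r s-r<0$ off $r=0$, so $r$ decreases strictly. Once $r<e$ we also have $g_2\ge e(1+s-r)-sr>0$ near $r=0$, so $s\uparrow1$. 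LaSalle on the compact invariant set $T$ shows the $\omega$-limits lie in $\{r=0\}$, and the edge dynamics $\dot s=e(1+s)(1-s)s>0$ push them to $p_4$, unless the trajectory starts at $p_1$ or $p_3$.
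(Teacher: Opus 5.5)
Your proposal has a genuine gap in the non-hyperbolic cases, which is exactly where the paper does its real work with explicit Lyapunov functions. For despair at $e=0$ and bliss at $e=1$, the monotone squeeze cannot work, because the equilibrium continua sit on the wrong side of the order. A point $(0,s')\in P_6$ satisfies $(0,s')\succeq(r_0,s_0)$ only if $r_0=0$. A point $(1,s')\in P_7$ satisfies $(1,s')\succeq(r_0,s_0)$ iff $s'\le s_0$, which only yields $s(t)\ge s_0$, a bound in the wrong direction. What you would need is a comparison point better than $x_0$ near $p_4$ (resp.\ worse than $x_0$ near $p_2$) whose trajectory is known to stay nearby, and that is precisely the claim to be proved. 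In fact no full-dimensional order interval $[p_4,y]$ or $[y,p_2]$ is forward invariant here: for $e=0$ one has $\dot s=-rs^2(1-s)<0$ on the open square, and for $e=1$ one has $\dot s=(1-r)(1+s)(1-s)s>0$ there. Your fallback for $e=0$ (the signs $\dot s\le0$, $\dot r\le0$ plus an unspecified sublevel argument) is not enough either. Since $s$ drifts \emph{away} from despair, you must show quantitatively that the decay of $r$ dominates the drift of $s$. The paper does this with $W=r+(1-s)$, whose derivative $r\big(((1-s)r-s)(1-r)+(1-s)s^2\big)$ is $\le0$ on $U_{1/3}$. Equivalently, near $p_4$ you have $\dot r\le-r/2$ (while $r\le\tfrac14$, $s\ge\tfrac34$) and $\frac{d}{dt}\ln(1-s)=s^2r\le r$. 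Hence $1-s$ grows by at most the factor $\exp(2r_0)$, and a bootstrap gives stability.

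For bliss with $e<1$, the textbook centre manifold theorem does not give what you claim. The reduced flow $\dot s=(e-1)s^2(1-s)$ on the edge $r=1$ attracts only from the side $s>0$. As a flow on $\R$ it is unstable at $0$, so a literal application returns instability of $p_2$ in $\R^2$, whereas the theorem concerns stability relative to $[0,1]^2$. You would need a one-sided version, e.g.\ the reduction principle combined with the fact that $\{s=0\}$ is the stable manifold of $p_2$. A direct estimate is simpler and also settles $e=1$, where your monotonicity remark fails as explained above. Set $\rho=1-r$. Then $\dot\rho\le-\rho/2$ while $\rho,s\le\tfrac18$, and $\frac{d}{dt}\ln s=\big((e-1)s+(e+s)\rho\big)(1-s)\le(e-1)s(1-s)+2\rho$. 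Since $\int_0^\infty\rho\,dt\le2\rho_0$, a bootstrap keeps $s$ small, which gives stability for all $e\le1$ and $s\to0$ if $e<1$. The paper's route is shorter still: the single linear function $V=(1-r)+s$ has $\dot V\le0$ on $\Omega_{1/6}$ for all $e\in[0,1]$, strictly off $p_2$ when $e<1$. Also delete your remark that linearization settles (c) for $e>1$. With the corrected Jacobian $\operatorname{diag}(-1,0)$ it does not; instability comes from the invariant edge $r=1$, as in the paper.

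The rest is sound. Part (a) and the hyperbolic cases coincide with the paper. In (b), your reduction to $(1-s)(e-r)<(1+r)(e+s)$ is tidier than the paper's explicit determinant. Moreover, $e\le r$ needs no formula: $e(1+s-r)=sr$ at any interior equilibrium gives $e=sr/(1+s-r)\le r$. In (e), the tangency value is $\frac{d}{dt}(s-r)=e\,r(1-r)$, not $(e+r^2)r(1-r)$; this is harmless. Your attraction argument differs from the paper's, which uses Poincar\'e--Bendixson plus convergence for planar monotone systems and then excludes $p_1,p_3$ by local sign checks. You instead use $r$ as a Lyapunov function on $T$, since $(1-s)r-s\le-rs$ there, together with $e(1+s-r)-sr=e(1-r)+s(e-r)>0$ once $r<e$. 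This is more elementary and rules out convergence to $p_1$ and $p_3$ automatically.
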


The interpretation of statement~\ref{item:RS-stability-bliss} is that
even though a patient may be affected by not-too-intense negative
external stimuli, as long as they are sufficiently close to bliss from
the beginning, they will remain close to bliss and even get closer to
bliss as time progresses.

Likewise, if someone is too close to despair already at the beginning,
they will remain close and approach despair over time (as long as the
external stimulus is not exactly zero). In other words, once a patient
is too close to despair, there is no recovery\footnote{We should
  clarify that there is ``no recovery'' in this model, which is only a
  mathematical model not describing any real person and, in
  particular, this model does not take into account any form of
  intervention or treatment at all.}. In fact, there is a universal
threshold: once the symptoms are at least as strong as the resilience
and the external stimulus is not identically zero, a patient cannot
recover from the depression and will approach despair.
\begin{proof}
  \begin{description}
  \item [We start with part~\ref{item:RS-stability-p1-p3}.]% 
    We
    consider the corresponding linearizations. The
    Jacobian matrices
      \begin{align*}
        J_f(p_{1}) =  \begin{pmatrix} 0 & 0 \\ 0 &e \end{pmatrix} \quad \text{and} \quad
        J_f(p_{3}) =  \begin{pmatrix} 1  & 0 \\ 0& 1-e  \end{pmatrix}
      \end{align*}
    have the positive eigenvalues $e$ and $1$, respectively, so that for
    $e>0$ the assertion follows immediately
    from~\cite[Thm.~4.7]{khalil2002-nonlinear-systems}. For $e=0$ the
    matrix $J_{f}(p_{3})$ still has the positive eigenvalue $1$ and
    $p_{3}$ is unstable as well. The matrix $J_f(p_{1})$, however, is
    zero for $e=0$, so that the linearization method cannot be
    applied. To show
    instability in this case we consider the boundary section
      \begin{align*}
        M = \left\{  \binom{r}{s} \in [0,1]^2 \colon s=0 \right\}
      \end{align*}
    of the unit square. The set $M$ is invariant: Indeed, from
    \eqref{eq:3-RS} it follows that $\dot s =0$ for every $(r,s) \in
    M$. Moreover, it holds that
      \begin{align*}
        \dot r = (1-r)r^2 >0 \qquad \text{ for all }(r,s) \in M \text{ with } r\notin\{0,1\}.
      \end{align*}
    Thus, from every neighborhood of the origin emerges a trajectory
    with initial condition $(r_0,s_0) \in M$ that evolves away from
    the origin along $M$, showing instability of $p_{1}$.  This
    completes the proof of part~\ref{item:RS-stability-p1-p3}.

  \item[For part~\ref{item:RS-instability-p5}] we simply note that the
    Jacobian matrix $J_{f}(p_{5})$ has one positive and one negative
    eigenvalue when $e\in(0,1)$, cf.\ Fig~\ref{fig:eigenvalues-Jfp5},
    and the argument is thus the same as for
    part~\ref{item:RS-stability-p1-p3}. More precisely, with
    $D\coloneqq\sqrt{5e^{2}+4e}$ one has
    $\det J_{f}(p_{5}) = \frac{e\,D\,(D-2e-1)}{(e+1)^{2}}$, and
    $D<2e+1$ if and only if $5e^{2}+4e < 4e^{2}+4e+1$ if and only if
    $e<1$. Hence $\det J_{f}(p_{5})<0$ for $e\in(0,1)$, i.e.\
    $p_{5}(e)$ is a saddle.
    \begin{figure}[htb]
      \centering
      \includegraphics[width=0.6\linewidth]{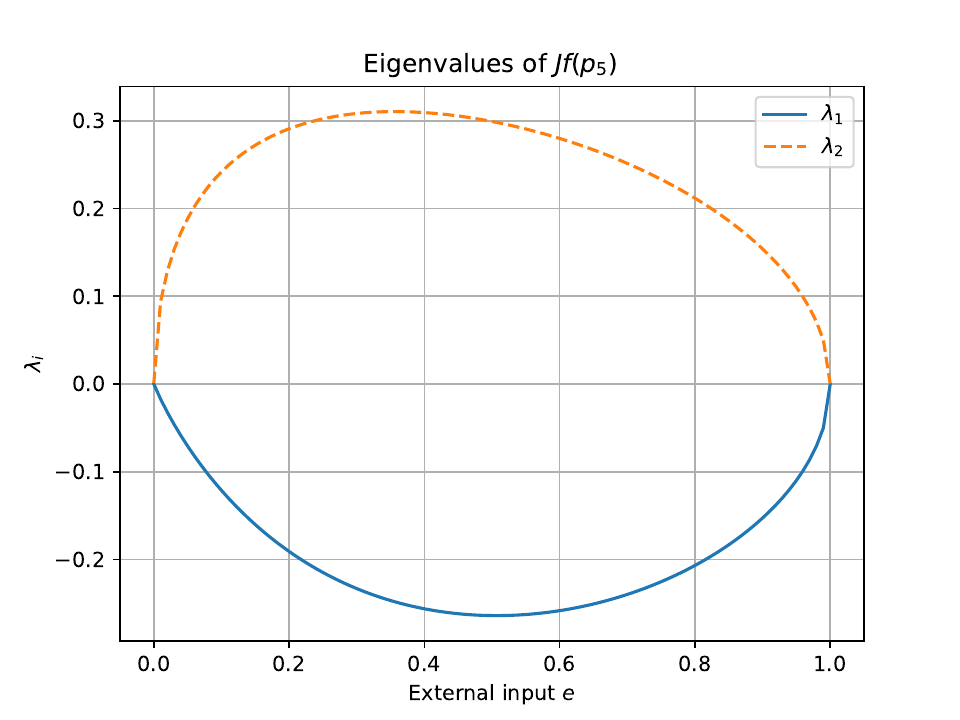}
      \caption{%
        \label{fig:eigenvalues-Jfp5}%
        \textbf{The eigenvalues of $J_f(p_5)$ as a function of $e$.}}
    \end{figure}
    
  \item[For part~\ref{item:RS-stability-bliss}] we consider the
    function
      \begin{align*}
        V(r,s)\coloneqq (1-r) +s,
      \end{align*}
    which is continuous in $(r,s)$, zero only at $p_{2}$ (bliss), and
    positive elsewhere in the unit square. The derivative of $V$ along
    solutions to \eqref{eq:3-RS} is given by
      \begin{align*}
        \dot V(r,s)&=\tfrac{\operatorname{d}}{\operatorname{d}t}V(x(t)) =\nabla V(r,s)^T\cdot f(r,s,e) \\
                   &=\frac{\partial V}{\partial r}(r,s) \cdot \big( (1-s)r - s \big) (1-r) r
                     + \frac{\partial V}{\partial s}(r,s) \cdot \big( e (1+s-r) - s r   \big) (1-s) s\\
                   &= - \big( (1-s)r - s \big) (1-r) r
                     + \big( e (1+s-r) - s r   \big) (1-s) s.
      \end{align*}
    In a first step we show that there is a $\delta \in (0,1]$ such
    that
      \begin{align*}
        \dot V(r,s)\leq 0
      \end{align*}
    for all
      \begin{align*}
        (r,s) \in \Omega_\delta \coloneqq \{ (r,s) \in [0,1] \times [0,1] \colon  1-r+s < \delta \} 
      \end{align*}
    and for all $e\in [0,1]$.  Then $V$ defines a Lyapunov function on
    $\Omega_\delta$ and the stability of \emph{bliss} follows from
    \cite[Thm.~4.1]{khalil2002-nonlinear-systems}.
    
    To verify this, we first note that
      \begin{align}\label{eq:V-dot-bliss-r=1}
        \dot V (1,s)=  \, (1-s)s^2(e-1) \leq 0 
      \end{align}
    for all $s \in [0,1]$ and for all $e \in [0,1]$.  To treat the case $0<r<1$,  we rewrite $\dot V (r,s)$ as follows 
      \begin{align*}
        \dot V (r,s)
        =&-\, (1-r)r^2  +  \left( (1-r^2)r + e(1-r) \right)\, s  +   r(e-1) \, s^2 - (e -r) \, s^3 .
      \end{align*}
    Considering $r \in (0,1)$ as a fixed but arbitrary parameter, we
    see that $\dot V(r,s)$ defines a cubic polynomial in $s$, say
    $p_r(s)$. To make this more explicit, we will write
      \begin{align*}
        \dot V (r,s) = p_r(s)=  a_0(r) + a_1(r) s + a_2(r) s^2 +a_3(r) s^3
      \end{align*}
    with the $r$-dependent coefficients
      \begin{align*}
        a_0(r) &\coloneqq -\, (1-r)r^2, \\
        a_1(r) &\coloneqq (1-r^2)r + e(1-r) , \\
        a_2(r) &\coloneqq r(e-1) , \\
        a_3(r) &\coloneqq r-e.
      \end{align*}
    For all $r\in(0,1)$ and all $e \in [0,1]$, the coefficients satisfy $a_0(r) = -(1-r)r^{2} < 0$ and
      \begin{align*}
        a_1(r) = (1-r)\big( r(1+r) + e \big) \leq 3\,(1-r).
      \end{align*}
    Moreover, since $r(e-1) + (r-e)s \leq 1-r$ for all $r,s,e\in[0,1]$
    (indeed, this is equivalent to $re + (r-e)s \leq 1$, whose left-hand
    side is linear in $s$ with the endpoint values $re \leq 1$ at $s=0$
    and $re + r - e = r + e(r-1) \leq 1$ at $s=1$), the two highest
    order terms admit the bound
      \begin{align*}
        a_2(r)\, s^2 + a_3(r)\, s^3 = \big( r(e-1) + (r-e)\, s \big)\, s^2 \leq (1-r)\, s^2.
      \end{align*}
    Now set $\delta \coloneqq \tfrac16$. Every $(r,s) \in \Omega_\delta$ with $r < 1$ satisfies $r > 1-\delta$ and $s < \delta$, hence
      \begin{align}\label{eq:V-dot-bliss-r-in-0-1}
        \dot V (r,s) = p_r(s) \leq (1-r) \big( {-r^{2}} + 3 s + s^{2} \big)
        < (1-r) \big( {-(1-\delta)^{2}} + 3\delta + \delta^{2} \big)
        = (1-r) ( 5\delta - 1 ) < 0
      \end{align}
    for all $e \in [0,1]$.  Consequently, for this choice of $\delta$,
    and taking \eqref{eq:V-dot-bliss-r=1} into account for the
    boundary case $r=1$, we have that $V$ defines a Lyapunov function
    on the neighborhood $\Omega_\delta$ of bliss. Hence, bliss is a
    stable equilibrium.

    To treat the case $e<1$, we reinspect the above inequalities \eqref{eq:V-dot-bliss-r=1} and \eqref{eq:V-dot-bliss-r-in-0-1}. It turns out that 
      \begin{align*}
        \dot V(r,s) <0 \qquad \text{ for all } (r,s) \in \Omega_\delta  \setminus \{(1,0)\}.
      \end{align*}
    Applying \cite[Thm.~4.1]{khalil2002-nonlinear-systems}, we conclude that bliss is asymptotically stable if $e<1$.

    Now consider $e>1$. The edge $r=1$ is invariant, i.e., $r$ remains
    constant but $s$ increases with $\dot s = s^{2}(1-s)(e-1)>0$,
    hence bliss is unstable.

  \item[To see part~\ref{item:RS-AS-despair},] we first treat the case
    $e \neq 0$. In this case, the linearization is given by
      \begin{align*}
        J_f\left(\binom{0}{1} \right) =  \begin{pmatrix} -1&0  \\ 0 &-2e \end{pmatrix}.
      \end{align*}
    As $e \neq 0$, both eigenvalues are negative and using
    \cite[Thm.~4.7]{khalil2002-nonlinear-systems} we conclude that
    despair is locally asymptotically stable for all $e>0$.
    
    For the case $e=0$, we shall show that 
      \begin{align*}
        W(r,s) \coloneqq r + (1-s),
      \end{align*}
    which is continuous in $(r,s)$, zero only at $p_{4}$ (despair), and
    positive elsewhere in the unit square, defines a Lyapunov function
    on some neighborhood of despair. The assertion follows from
    \cite[Thm.~4.1]{khalil2002-nonlinear-systems}.

    The derivative of $W$ along solutions of \eqref{eq:3-RS} for $e=0$ is given by
      \begin{align*}
        \dot W(r,s)& =\nabla W(r,s)^T\cdot f(r,s,0)
                     =  \big( (1-s)r - s \big) (1-r) r +  r   (1-s) s^2\\
                   &= r \Big( \big( (1-s)r - s \big) (1-r) +  (1-s) s^2 \Big).
      \end{align*}
    To specify the neighborhood, set $\delta\coloneqq\tfrac13$. For all
      \begin{align*}
        (r,s) \in U_\delta \coloneqq \{ (r,s) \in [0,1] \times [0,1] \colon  r < \delta, \, s > 1-\delta \}
      \end{align*}
    we get that
      \begin{align*}
        (1-s)r - s   \leq (1-s) \delta - s
        = \delta - s (1+\delta)
        <  \delta - (1-\delta) (1+\delta)
        = \delta^2 + \delta -1 <0.
      \end{align*}
    Using $1-r > 1-\delta$ and $0 \leq (1-s)s^2 \leq 1-s < \delta$, this yields
      \begin{align*}
        \big( (1-s)r - s \big) (1-r) +  (1-s) s^2
        < (\delta^2+\delta-1)(1-\delta) + \delta = -\delta^{3} + 3\delta - 1 = -\tfrac{1}{27} < 0.
      \end{align*}
    Hence, $\dot W (r,s) \leq 0 $ for all $(r,s) \in U_\delta$, with
    equality if and only if $r=0$. That is, $W$ defines a Lyapunov
    function on the neighborhood $U_\delta$ and by
    \cite[Thm.~4.1]{khalil2002-nonlinear-systems} despair is a stable
    equilibrium for $e=0$.

  \item[Now to part~\ref{item:RS-upper-triangle-invariance-and-ROA}.]%
    To show the invariance of the upper left triangle $s\geq r$, we
    consider the Lyapunov function $W$. Along the line $s=r$ we find
    that
      \begin{align*}
        \dot W(s,s) = {\left(s^{2} - s\right)} e \leq 0
      \end{align*}
    whenever $s\in[0,1]$ and $e\geq 0$. According to
    \cite[Thm.~16.9]{Aman-1990-ODE-engl} this shows that the triangle
    $s\geq r$ is invariant under~\eqref{eq:3-RS}.

    That all trajectories starting in $T\setminus\{p_{1},p_{3}\}$
    converge to despair can be seen in Fig~\ref{fig:quiver}.  A formal
    proof is as follows. Recall that, by the Poincar\'{e}--Bendixson
    Theorem, cf.\
    \cite[Thm.~3.1.14]{hinrichsenpritchard2005-mathematical-systems-theoryi---modelling-state-space-analysis-stability-and-robustness},
    together with \cite[Thm.~3.22]{HirschSmith2005-handbook},
    Theorem~\ref{thm:monotonicity-RS} yields that the $\omega$-limit
    sets can only consist of equilibria. Hence, it remains to verify
    that no trajectory starting in $T$ converges to $p_1$ or $p_3$.

    We start with $p_1$. Let $\tau>0$. Then for all $0<r<\tau$ and
    $1>s>0$ we get that
      \begin{align*}
        \dot s &= ( e(1+s-r) -sr )(1-s)s \\
               &>( e(1+s-\tau) -s\tau )(1-s)s = e(1-s^2)s - \tau \,s(1-s)(e+s).
      \end{align*}
    Choosing $\tau$ sufficiently small, we obtain that $\dot s >0$
    around the equilibrium $p_1$. Hence, trajectories in $T$ cannot
    converge to $p_1$.

    To treat the equilibrium $p_3$, let $\gamma >0$. Then for all
    $1>s> 1-\gamma$ and $0<r<1$ it holds that
      \begin{align*}
        \dot r &= ( (1-s) r  -s  )(1-r)r \\
               &<  ( \gamma r  - (1-\gamma)  )(1-r)r  = -r(1-r) + \gamma (1-r^2)r.
      \end{align*}
    Choosing $\gamma$ sufficiently small, we obtain that $\dot r <0$
    around the equilibrium $p_3$. Hence, trajectories in $T$ cannot
    converge to $p_3$.

    For completeness, we mention that
    $p_{2}=(1,0)^{T}\notin{T}$ because $s<r$ there, and
    $p_{5}(e)\notin{T}$ because its two components differ by
    $r-s=\frac{e}{e+1}>0$ for $e>0$. For $e=1$ one additionally has
    $P_{7}\cap{T}=\{p_{3}\}$, which has been discussed
    already.\qedhere
  \end{description}
\end{proof}

\begin{remark}
  The asymptotic stability of \emph{bliss} cannot be concluded
  from~\cite[Thm.~4.7]{khalil2002-nonlinear-systems} as the Jacobian
    \begin{align*}
      J_f  \binom{1}{0} =
      \begin{pmatrix}
        -1 & 0 \\
        0 & 0
      \end{pmatrix}
    \end{align*}
  has the eigenvalue zero. Moreover, bliss cannot be asymptotically
  stable for $e=1$ as $f(1,s,1)=0$ for all $s\in[0,1]$. Hence, no
  points on the boundary $r=1$ of the unit square are attracted by
  \emph{bliss}.
\end{remark}

One significance of Theorem~\ref{thm:stability} is that if an
individual with mild depression symptoms and very high resilience
experiences no further external stimuli, then their state remains
close to bliss and converges to it as time progresses. In particular,
the depression symptom eventually dissipates to zero, while the
resilience level approaches its maximum.

\section*{Memory versus resilience}

Some psychological theories emphasize the role of memory in the course
of depressive illness. One example is `schema therapy' as developed
by J.E.~Young based on ideas of A.T.~Beck's `cognitive
therapy'~\cite{youngs.weishaar2003-schema-therapy:-a-practitioners-guide}. Young's
schema therapy (read: qualitative model) is among the `third wave of
cognitive-behavioral therapies' and centers around the concept of
maladaptive schemata. Schemata or schemas are a heuristic technique to
encode and retrieve memories.

Instead of considering a selection of specific schemas, we concentrate
in our discussion on one feature common to all of them: they are
linked to a biased memory of new (and old) events and stimuli. We
therefore focus simply on the ``memory'' aspect and do not
discriminate between different biases or schema categories. Our aim is
to demonstrate that reasoning similar to that used in models of
resilience can also be employed to develop mathematical models of
schemas and psychological disorders. Other well-established factors
shaping resilience and vulnerability include stress
sensitization/kindling \cite{Segal1996,Post1992,Monroe2005},
allostatic load \cite{McEwenStellar1993,McEwen1998}, hysteresis
\cite{Cramer2016,Borsboom2017}, and network connectivity
\cite{BorsboomCramer2013,Borsboom2017}.

Taken together, these mechanisms suggest that psychological
vulnerability can be understood as an emergent property of a dynamical
system whose current state depends not only on its present inputs, but
also on its history and accumulated load. This perspective provides a natural
bridge to resilience--symptom models, in which the persistence of
maladaptive states and the system's ability to recover from
perturbations can be studied in terms of the same dynamical
principles.

From a dynamical systems perspective, memory and resilience can be
viewed as two opposing aspects of the same system. Memory reflects the
persistence of the influence of past states on current dynamics,
whereas resilience reflects the ability of a system to return to a
previous state following a perturbation. We therefore consider them as
qualitatively opposing properties, without imposing a specific
functional relationship between them. Whether this relationship is
linear, nonlinear, or takes another form is ultimately an empirical
question that would require appropriate data to determine.

In the following, we subsume this persistence of past states in one
variable, which we call \emph{memory of depression} and denote by
$m$. Here, memory does not refer to memory in the cognitive sense, but
to the persistence of the influence of past states on the current
dynamics of the system. As a first-order approximation, we take memory
to be the counterpart of resilience and assume
	\begin{align}
		\label{eq:m=1-r}
		m=1-r.
	\end{align}
This assumption reflects the idea that systems with high resilience
are predominantly driven by current environmental inputs, whereas
systems with low resilience exhibit stronger internally sustained
dynamics due to the persistent influence of their history. We
emphasize that $m=1-r$ is a simplifying modeling assumption rather
than an empirically established relationship. It allows us to
investigate, within the same mathematical framework, how increasing
persistence of past states affects the dynamics and stability of
depressive symptoms.

Consequently, we may consider the resilience--symptom model under the
specific change of variables given by Eq.~\eqref{eq:m=1-r}. In this
first-order approximation, resilience and memory are therefore
represented as two complementary properties of the system. In the
state variables $(m,s)^{T}$ and with the same external influence
$e\colon [0,\infty) \to [0,\infty)$ as before, the basic
\emph{memory--symptom} model arises from model \eqref{eq:3-RS} via the
coordinate change~\eqref{eq:m=1-r}, and it takes the form
	\begin{equation}
		\label{eq:ode-sympt-memory}
		\begin{pmatrix}
			\dot 
			m\\
			\dot 
			s
		\end{pmatrix}
		=g(m,s,e)\coloneqq
		\begin{pmatrix}
			\big(s - (1-s) (1-m) \big) (1-m) m\\
			\big( e( s+m) - s (1-m) \big) (1-s) s
		\end{pmatrix}.
	\end{equation}
The analysis of the resilience--symptom model carries over to the
memory--symptom model with only minor changes, so that we only
state the results.

A patient with state $(m_{1},s_{1})^{T}$ \emph{does no worse than} a
patient with state $(m_{2},s_{2})^{T}$, if their memory and symptom
levels do not exceed those of the other patient. More formally, we define the
partial ordering
	\begin{align}\label{eq:def-cone-states-ms-model}
		\begin{pmatrix}
			m_{1}\\
			s_{1}
		\end{pmatrix}
		\succeq
		\begin{pmatrix}
			m_{2}\\
			s_{2}
		\end{pmatrix}
		\quad :\!\Longleftrightarrow \quad
		\begin{pmatrix}
			m_{1}-m_{2}\\
			s_{1}-s_{2}
		\end{pmatrix}
		\in K_m\coloneqq
		\left\{   \begin{pmatrix}
			x\\
			y
		\end{pmatrix} \in \mathbb{R}^2
		\colon 
		x \leq 0 ,\, y\leq 0
		\right\}.
	\end{align}
That is, the partial ordering is defined by the southwest orthant. The
same analysis as for the resilience--symptom model applies here as
well. To verify the monotonicity of this model, we will again show
that the extended Kamke condition holds. Indeed, it holds that
	\begin{align*}
		\frac{\partial g_1}{\partial s} (m,s,e) =m  (1-m) (2-m) \geq 0 
		\quad \text{ and } \quad
		\frac{\partial g_2}{\partial m} (m,s,e) =  s (1-s) (e+ s )  \geq 0      
	\end{align*}
for all $(m,s) \in [0,1] \times [0,1]$ and for all $ e \geq 0$. As the
partial ordering with respect to the inputs, we choose again
\eqref{eq:def-input-ordering}. The remaining extended Kamke conditions
read
	\begin{align*}
		\frac{\partial g_1}{\partial e} (m,s,e) = 0
		\quad \text{ and } \quad
		\frac{\partial g_2}{\partial e} (m,s,e) = s(1-s) \, (s+m)\geq 0       
	\end{align*}
for all $(m,s) \in [0,1] \times [0,1]$ and for all $ e
\geq 0$. Consequently, we have the following statement, in analogy with
Theorem~\ref{thm:monotonicity-RS} and Lemma~\ref{lemma:invariance-of-unit-square}.

\begin{theorem}
  The memory--symptom model \eqref{eq:ode-sympt-memory} is monotone on
  the unit square with respect to the partial orderings
  \eqref{eq:def-cone-states-ms-model} and
  \eqref{eq:def-input-ordering}. Moreover, the set $[0,1]^{2}$ is
  positively invariant under \eqref{eq:ode-sympt-memory}.
\end{theorem}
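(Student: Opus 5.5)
The plan is to reduce everything to the results already established for the resilience--symptom model via the coordinate change $m=1-r$. A self-contained route through the extended Kamke condition is available as a cross-check.

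\emph{Monotonicity.} Consider the affine map $\Phi(r,s)=(1-r,s)$. It maps $[0,1]^2$ onto itself and conjugates \eqref{eq:3-RS} to \eqref{eq:ode-sympt-memory}. Indeed, $\dot m=-\dot r$, and substituting $r=1-m$ into $f$ gives exactly $g$. Hence the solutions of the memory--symptom model, with initial state $(m_0,s_0)$ and input $e$, are
$$\psi\big(t,(m_0,s_0,e)\big)=\Phi\Big(\varphi\big(t,(1-m_0,s_0,e)\big)\Big).$$
The order relation transforms correctly: $\Phi(x_1)-\Phi(x_2)=\operatorname{diag}(-1,1)(x_1-x_2)$, and $\operatorname{diag}(-1,1)$ maps the southeast orthant $K$ onto $\{(x,y)\colon x\le 0,\,y\le0\}=K_m$. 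So $(m_1,s_1)\succeq(m_2,s_2)$ in the sense of \eqref{eq:def-cone-states-ms-model} holds if and only if $(1-m_1,s_1)\succeq(1-m_2,s_2)$ in the sense of \eqref{eq:def-state-ordering}. The input ordering \eqref{eq:def-input-ordering} is unchanged. Monotonicity of \eqref{eq:ode-sympt-memory} then follows directly from Theorem~\ref{thm:monotonicity-RS}.

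\emph{Cross-check via the Kamke condition.} Alternatively, one can invoke \cite[Cor.~III.3]{angelisontag2003-monotone-control-systems} for the cone $K_m$ directly. Because $K_m=-\R_+^2$ induces the same order as the standard orthant up to reversal, the condition becomes cooperativity together with the appropriate input sign. Concretely, it requires $\partial g_1/\partial s\ge0$ and $\partial g_2/\partial m\ge0$ on $[0,1]^2$, plus $\partial g_1/\partial e\ge0$ and $\partial g_2/\partial e\ge0$. The ordering on inputs is reversed in the same way: a larger $e$ is ``worse'' and pushes the state in the ``worse'' direction. All four derivatives were computed above and are nonnegative on the unit square for $e\ge0$. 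The one point needing care is matching sign conventions between the abstract cone formulation and the orientation of $K_m$ and $C$; the conjugation argument avoids this bookkeeping.

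\emph{Invariance.} The argument mirrors the proof of Lemma~\ref{lemma:invariance-of-unit-square}. The corners $(0,0)$ and $(1,1)$ are equilibria of \eqref{eq:ode-sympt-memory} for every $e$, since they are the images of bliss and despair. Take $(m_0,s_0)\in[0,1]^2$ and any admissible input $e$. Monotonicity gives
$$(1,1)=\psi\big(t,((1,1),e)\big)\preceq\psi\big(t,((m_0,s_0),e)\big)\preceq\psi\big(t,((0,0),e)\big)=(0,0),$$
which confines the trajectory to $[0,1]^2$. Consequently, the solution exists for all $t\ge0$ and stays in the unit square.

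Alternatively, invariance follows at once from the conjugacy, since $\Phi$ maps $[0,1]^2$ onto itself and Lemma~\ref{lemma:invariance-of-unit-square} already gives invariance for \eqref{eq:3-RS}. The main potential obstacle is therefore only verifying the order correspondence under $\Phi$, which is the diagonal computation above.
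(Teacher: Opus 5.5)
Your proof is correct, and your main argument for monotonicity takes a different route from the paper's. The paper checks the extended Kamke conditions for $g$ directly. It computes $\partial g_1/\partial s=m(1-m)(2-m)$, $\partial g_2/\partial m=s(1-s)(e+s)$, $\partial g_1/\partial e=0$ and $\partial g_2/\partial e=s(1-s)(s+m)$, all nonnegative on $[0,1]^2$. That is exactly your cross-check. Your sign bookkeeping there is right: $K_m$ and $C$ both carry sign $-1$ in every coordinate, so every sign product is $+1$ and all four derivatives must be nonnegative. The paper then gets invariance ``in analogy with'' Lemma~\ref{lemma:invariance-of-unit-square}, which is your sandwich between $(1,1)$ and $(0,0)$.

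Your primary argument instead transports Theorem~\ref{thm:monotonicity-RS} through the involution $\Phi$, using $\operatorname{diag}(-1,1)K=K_m$. The two routes are one computation in two coordinate systems. Since $g(x,e)=D\Phi\,f(\Phi(x),e)$ with $D\Phi=\operatorname{diag}(-1,1)$, one has $J_g(x)=D\Phi\,J_f(\Phi(x))\,D\Phi$ and $\partial_e g=D\Phi\,\partial_e f$. These identities flip exactly the signs that distinguish the two sets of Kamke conditions. The conjugacy saves the recomputation and makes precise the paper's informal remark that the analysis ``carries over'': equilibria, their stability, and the triangles $T$ and $U=\Phi(T)$ transfer at once. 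The direct check, by contrast, is self-contained and does not depend on Theorem~\ref{thm:monotonicity-RS}.

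One caveat applies both to your sandwich argument and to the paper's Lemma~\ref{lemma:invariance-of-unit-square}. The Kamke conditions hold only on the square (for example, $\partial g_1/\partial s<0$ for $1<m<2$), so monotonicity may only be invoked for trajectories already known to stay there. A non-circular argument uses the exact formulas along any solution, with $c=s-(1-s)(1-m)$:
\[
m(t)=m_0\exp\bigl(\textstyle\int_0^t c\,(1-m)\,d\tau\bigr), \qquad 1-m(t)=(1-m_0)\exp\bigl(-\textstyle\int_0^t c\,m\,d\tau\bigr).
\]
The analogous formulas hold for $s$. Hence no coordinate can leave $[0,1]$, and solutions exist globally.
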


The equilibria of the memory--symptom model
\eqref{eq:ode-sympt-memory} are those of the resilience--symptom model
under the change of coordinates~\eqref{eq:m=1-r}, cf.\
Fig~\ref{fig:quiver-ms}. The following list collects the
equilibria within the unit square $[0,1]\times [0,1]$.

	\begin{gather*}
		q_1 = \binom{0}{0},\quad
		q_2 = \binom{1}{0},\quad
		q_3 = \binom{1}{1},\quad
		q_4 = \binom{0}{1},\\
		q_{5}(e) =  \begin{pmatrix}
			\frac{e+2  - \sqrt{5 \, e^{2} + 4 \, e}}{2 \, {\left(e + 1\right)}}\\
			\frac{\sqrt{5 \, e^{2} + 4 \, e} - e}{2 \, {\left(e + 1\right)}}
		\end{pmatrix} \text{ for every } e\in[0,1],\\
		Q_{6} = \left\{\binom{0}{s}\colon s\in[0,1]\right\} \text{ only for } e=1, \rlap{\text{ and}}\\
		Q_{7} = \left\{\binom{1}{s}\colon s\in[0,1]\right\} \text{ only for }e=0.
	\end{gather*}
Following the interpretation of the resilience--symptom model, we call
$q_1$ and $q_3$ bliss and despair, respectively. Also, we note that
$q_5(0)= q_2$ and $q_5(1) = (0,\tfrac{1}{2})^T$.

\begin{figure}[htb]
  \centering
  \includegraphics[width=0.4\linewidth]{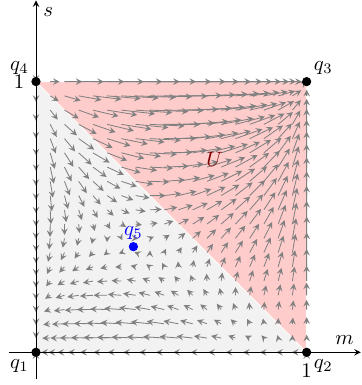}
  \caption{%
    \label{fig:quiver-ms}%
    \textbf{Vector field and equilibria for
    model~\eqref{eq:ode-sympt-memory} when $e=\nicefrac13$.}}
\end{figure}

Adapting the arguments given for the resilience--symptom model to the
present setting---and illustrated by the quiver plot in
Fig~\ref{fig:quiver-ms}---yields the following result.

\begin{theorem}
  The stability properties of the equilibria of the memory--symptom
  model for fixed values of $e$ are as follows.
  \begin{enumerate}[label=(\alph*), ref=(\alph*)]
  \item the equilibria $q_{2}$ and $q_{4}$ are unstable for
    $e\geq 0$;
  \item for $e\in(0,1)$ the equilibrium $q_{5}(e)$ is unstable;
  \item the equilibrium $q_{1}$ \emph{(bliss)} is stable
    if $e=1$; locally asymptotically stable for all
    $e\in [0,1)$; and unstable for $e>1$;
  \item the equilibrium $q_{3}$ \emph{(despair)} is locally
    asymptotically stable for all $e>0$ and stable for $e=0$;
  \item the upper right triangle
    $U= \{ (m,s)\in [0,1]^{2}\colon m+s\geq 1\}$ is positively
    invariant under \eqref{eq:ode-sympt-memory} for all $e\geq0$;
    moreover, if $e\neq 0$, then $U\setminus\{q_{2},q_{4}\}$ is
    contained in the domain of attraction for $q_{3}$
    \emph{(despair)}.
  \end{enumerate}
\end{theorem}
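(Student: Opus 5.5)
The plan is to transfer Theorem~\ref{thm:stability} to the memory--symptom model through the coordinate change \eqref{eq:m=1-r}. The map $\Phi(r,s)=(1-r,s)$ is an affine diffeomorphism of the unit square onto itself. It conjugates \eqref{eq:3-RS} and \eqref{eq:ode-sympt-memory}: if $(r(t),s(t))$ solves \eqref{eq:3-RS} with input $e$, then $(1-r(t),s(t))$ solves \eqref{eq:ode-sympt-memory} with the same input. This is checked directly. With $m=1-r$ one has $\dot m=-\dot r=-\big((1-s)r-s\big)(1-r)r=\big(s-(1-s)(1-m)\big)m(1-m)$. Likewise $e(1+s-r)-sr=e(s+m)-s(1-m)$, so $\dot s$ also matches. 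Since $\Phi$ is an isometry for the maximum norm on $[0,1]^2$, it preserves stability, instability, attractivity and domains of attraction of equilibria.

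Next I would match the equilibria. We have $\Phi(p_1)=(1,0)=q_2$, $\Phi(p_2)=(0,0)=q_1$, $\Phi(p_3)=(0,1)=q_4$ and $\Phi(p_4)=(1,1)=q_3$. The first component of $\Phi(p_5(e))$ is $1-\frac{e+D}{2(e+1)}=\frac{e+2-D}{2(e+1)}$, so $\Phi(p_5(e))=q_5(e)$. Also $\Phi(P_6)=Q_7$ and $\Phi(P_7)=Q_6$, with the conditions on $e$ unchanged.

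With these identifications, parts (a)--(d) follow immediately from parts \ref{item:RS-stability-p1-p3}--\ref{item:RS-AS-despair} of Theorem~\ref{thm:stability}:
\begin{itemize}
\item instability of $p_1,p_3$ gives instability of $q_2,q_4$;
\item the saddle $p_5(e)$ gives the saddle $q_5(e)$;
\item the statements for bliss $p_2$ give those for $q_1$;
\item the statements for despair $p_4$ give those for $q_3$.
\end{itemize}

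For part (e), the image of $T=\{s\ge r\}$ under $\Phi$ is $\{s\ge 1-m\}=\{m+s\ge 1\}=U$. Positive invariance is preserved under a conjugacy, so $U$ is positively invariant for all $e\ge 0$. For $e>0$, the set $T\setminus\{p_1,p_3\}$ lies in the domain of attraction of $p_4$, and its image $U\setminus\{q_2,q_4\}$ lies in the domain of attraction of $q_3$. Since $e\ge0$, the condition $e\ne0$ is the same as $e>0$.

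The only real work is the algebraic verification of the conjugacy, in particular that the $\dot m$ expression matches with the sign flip. I do not expect a genuine obstacle. If one prefers to avoid the conjugacy, an alternative is to redo the proofs directly with transformed Lyapunov functions: $\tilde V(m,s)=m+s$ near bliss and $\tilde W(m,s)=(1-m)+(1-s)$ near despair. These are exactly $V\circ\Phi^{-1}$ and $W\circ\Phi^{-1}$, so the same estimates carry over verbatim.
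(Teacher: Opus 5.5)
Your proposal is correct and follows essentially the same approach as the paper. The paper obtains the memory--symptom model from the resilience--symptom model via $m=1-r$, identifies the equilibria as images under this change of coordinates, and says the analysis of Theorem~\ref{thm:stability} carries over without writing out a separate proof. Your explicit conjugacy $\Phi(r,s)=(1-r,s)$, with the equilibrium correspondence and $\Phi(T)=U$ checked, is exactly that argument made precise.
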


The two locally asymptotically stable equilibria admit a natural
interpretation as two distinct dynamical regimes. The equilibrium
$(m,s)=(0,0)$ represents a resilient, non-depressed state in which the
influence of past depressive states is minimal in its neighborhood and
the system is predominantly responsive to current environmental
inputs. In contrast, the vicinity of $(m,s)=(1,1)$ represents a
persistent depressive regime in which symptoms and the influence of
past depressive states reinforce each other, resulting in a strongly
history-dependent system. The coexistence of these two stable states
implies that the long-term state of the system depends not only on its
current environment, but also on its trajectory and initial condition,
providing a natural interpretation in terms of path
dependence. Moreover, local asymptotic stability implies that
sufficiently small perturbations do not induce a transition between
these regimes, whereas a sufficiently large perturbation is required
to move the system from the healthy regime to the depressive
regime. In this sense, resilience can be interpreted as the ability of
the system to absorb perturbations while remaining within its current
dynamical regime.

\section*{Characteristic courses of depression and model properties}

Now we focus on simulating the model in various scenarios. One
objective is to match the model to figures shown in the
literature. Another aim is to reproduce additional characteristic and
expected behaviors of a depression. Lastly, we showcase some of the
structural properties of the model, and how they translate to courses
of illness.

\subsection*{Reproducing literature trajectory types}

To assess whether the proposed model can reproduce qualitatively
distinct patterns of resilience dynamics described in the literature,
we numerically simulate the model for different parameter
configurations and initial conditions. We compare the resulting
trajectories with the characteristic resilience trajectories reported
in the literature. The aim is not to fit the model to empirical data,
but to examine whether the dynamical mechanisms identified in our
analysis are sufficient to generate qualitatively similar patterns of
resilience.

Figures similar to Fig~\ref{fig:scenarien} and
Fig~\ref{fig:alternative-scenarios} feature prominently in the
resilience literature
\cite{galatzer-levyhuangbonanno2018-trajectories-of-resilience-and-dysfunction-following-potential-trauma:-a-review-and-statistical-evaluation,
  bonanno2004-loss-trauma-and-human-resilience:-have-we-underestimated-the-human-capacity-to-thrive-after-extremely-aversive-events,
  bonannochengalatzer-levy2023-resilience-to-potential-trauma-and-adversity-through-regulatory-flexibility}.
\begin{figure}[htb]
  \centering
  \includegraphics[width=0.8\linewidth]
  {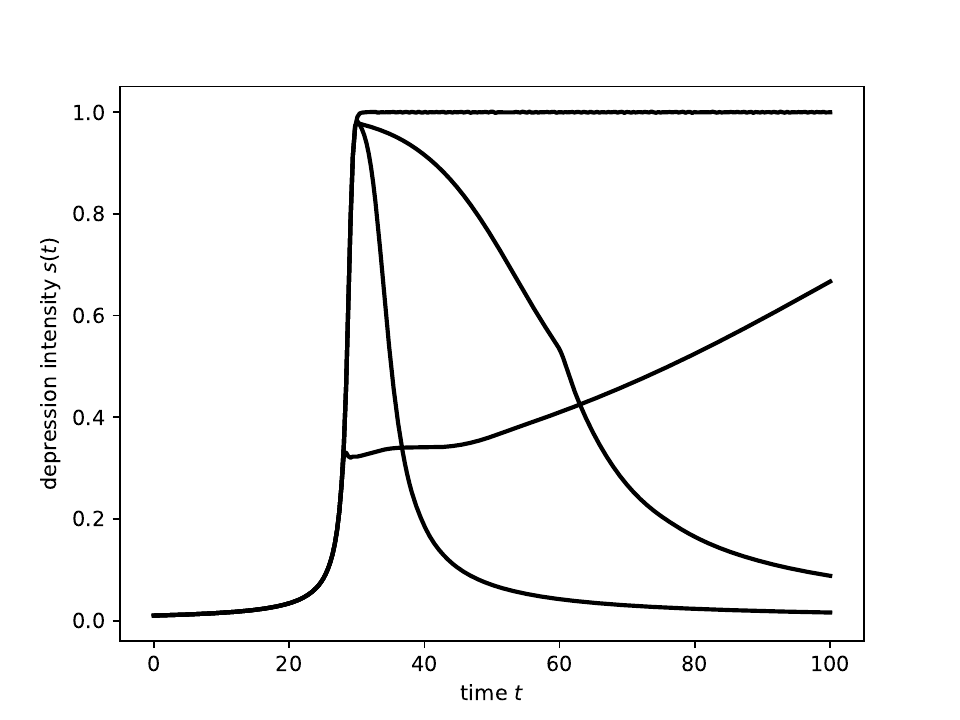}
  \caption{%
    \label{fig:alternative-scenarios}%
    \textbf{Alternative scenarios similar to Fig~\ref{fig:scenarien}
      but based on adverse input with larger values than one and
      matching
      \cite[Fig~1]{galatzer-levyhuangbonanno2018-trajectories-of-resilience-and-dysfunction-following-potential-trauma:-a-review-and-statistical-evaluation}
      more closely.}}
\end{figure}
Here we discuss how these different scenarios are reproduced by
model~\eqref{eq:3-RS}. To this end we will focus on the trajectories
shown in Fig~\ref{fig:scenarien} rather than
Fig~\ref{fig:alternative-scenarios} for simplicity.

\subsubsection*{The chronic scenario}

The \emph{chronic scenario} demonstrates that an initially
non-depressed patient with low resilience can be catapulted into
severe and ongoing depression through a single adverse event. This
would typically happen if the patient is not very resilient to begin
with, cf.\ Fig~\ref{fig:chronic-scenario}.

\begin{figure}[htb]
  \centering
  \includegraphics[width=0.8\linewidth]{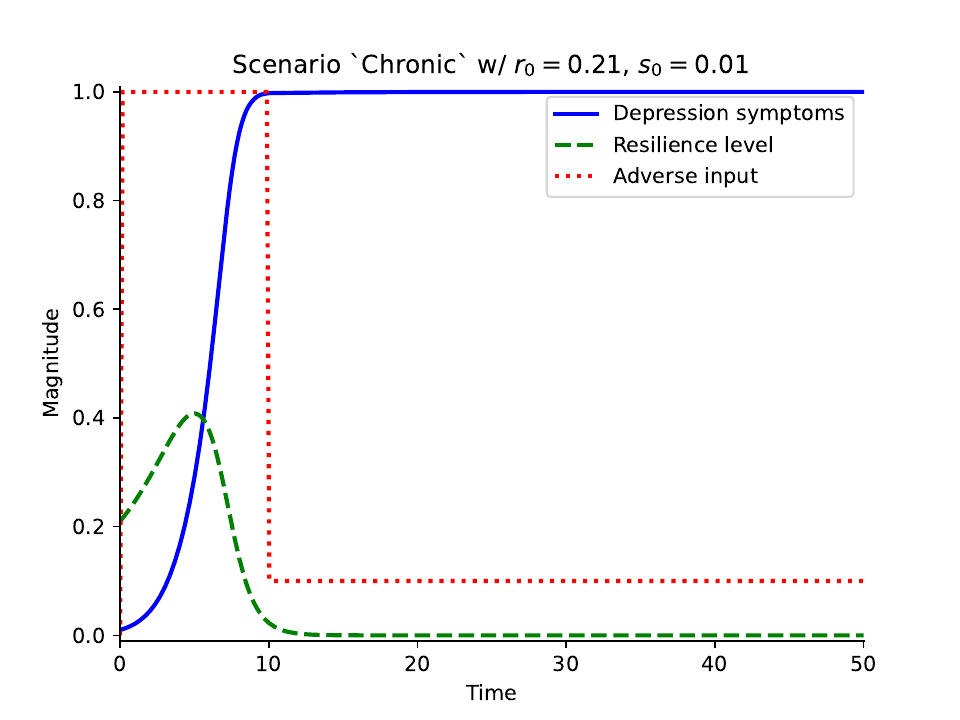}
  \caption{%
    \label{fig:chronic-scenario}%
    \textbf{Chronic scenario.}
    A scenario of a patient becoming chronically depressed
    after an adverse trauma period (time 0 to 10), followed by a
    constant minor adverse input. The patient starts out essentially
    not depressed, albeit with only a very low resilience
    level of about $0.2$. The resilience rises initially while the
    depression symptom is still small; however, the latter rises due
    to the adverse input. Once a certain threshold of depression
    symptom is reached, the resilience level starts to decline and the
    patient is catapulted into a state of severe depression and no
    resilience.}
\end{figure}

\subsubsection*{The delayed scenario}

Following an adverse event, in the \emph{delayed scenario} a patient does
develop depression symptoms more slowly. Here we have chosen an
adverse event that is shorter in duration than in the chronic example,
while keeping the same initial resilience and depression symptom
levels. Yet, due to the persistent background adversity level, the
patient eventually meets the same fate as in the previous example,
cf.\ Fig~\ref{fig:delayed-scenario}.

\begin{figure}[htb]
  \centering
  \includegraphics[width=0.8\linewidth]{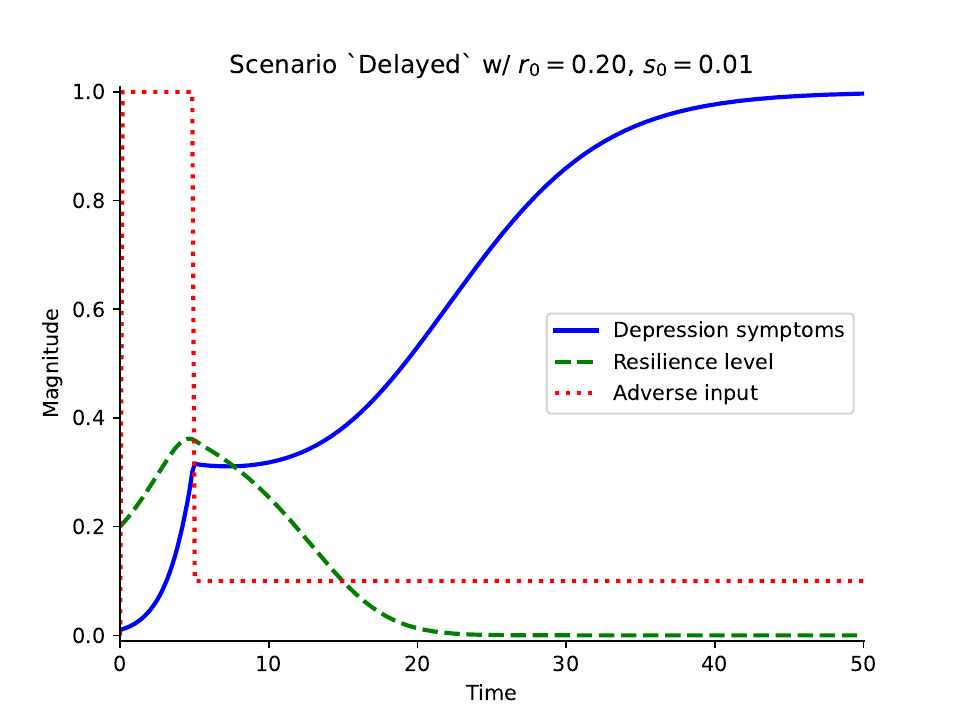}
  \caption{\label{fig:delayed-scenario}%
    \textbf{Delayed scenario.}
    A short adverse event during $t\in[0,5]$ triggers a rise of
    depression symptom from near zero. The resilience level starts out
    at $0.2$ and continues to improve; however, at around $t=5$, while
    the adverse input drops to a persistent but low $0.1$, the
    resilience and symptom levels have reached a tipping point: from
    here the symptom level continues to rise while the resilience
    level starts to deplete. Ultimately both levels approach their
    respective worst extremes (the \emph{despair} state).}
\end{figure}

\subsubsection*{The recovery scenario}

Despite the same adverse event as in the first (chronic) example, in
the \emph{recovery scenario} we see a patient who initially shows
worsening depression symptoms, which, however, then dissipate over
time, despite a persistent background adversity level, as is seen in
Fig~\ref{fig:recovery-scenario}.

\begin{figure}[htb]
  \centering
  \includegraphics[width=0.8\linewidth]{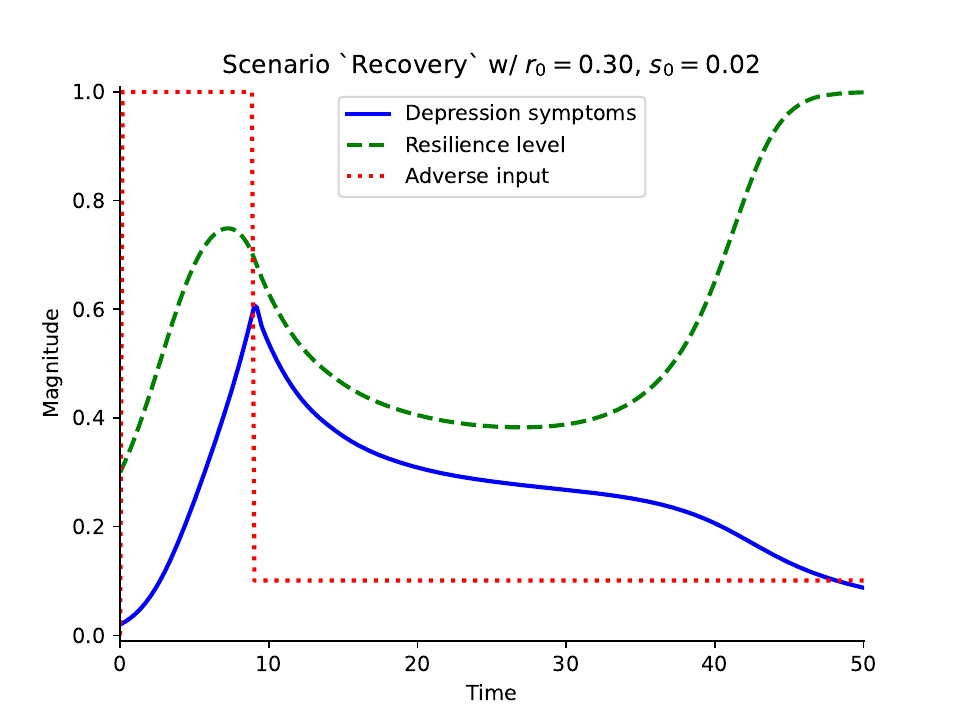}
  \caption{%
    \label{fig:recovery-scenario}%
    \textbf{Recovery scenario.}
    The adverse input, of intensity $1.0$ during
    $t\in[0,9]$, drops back to a persistent background adversity level
    of about $0.1$ afterwards. In this initial phase the depression symptom
    worsens from near zero while for the most part it is still small
    enough that the resilience level continues to recover, starting
    from $0.3$. However, a symptom level threshold is reached
    before $t=9$, so that the resilience level starts to decrease
    again.  When the adverse input drops to the background level at
    $t=9$, the depression symptom starts to improve and, while
    initially still declining, the resilience level eventually starts
    to grow again.}
\end{figure}

\subsubsection*{The resilience scenario}

Facing the same adversity as in previous examples, the
\emph{resilience scenario} demonstrates a patient whose resilience
level starts high and even grows further during the especially adverse
initial phase, while the depression symptom remains suppressed near
zero the entire time.

\begin{figure}[htb]
  \centering
  \includegraphics[width=0.8\linewidth]{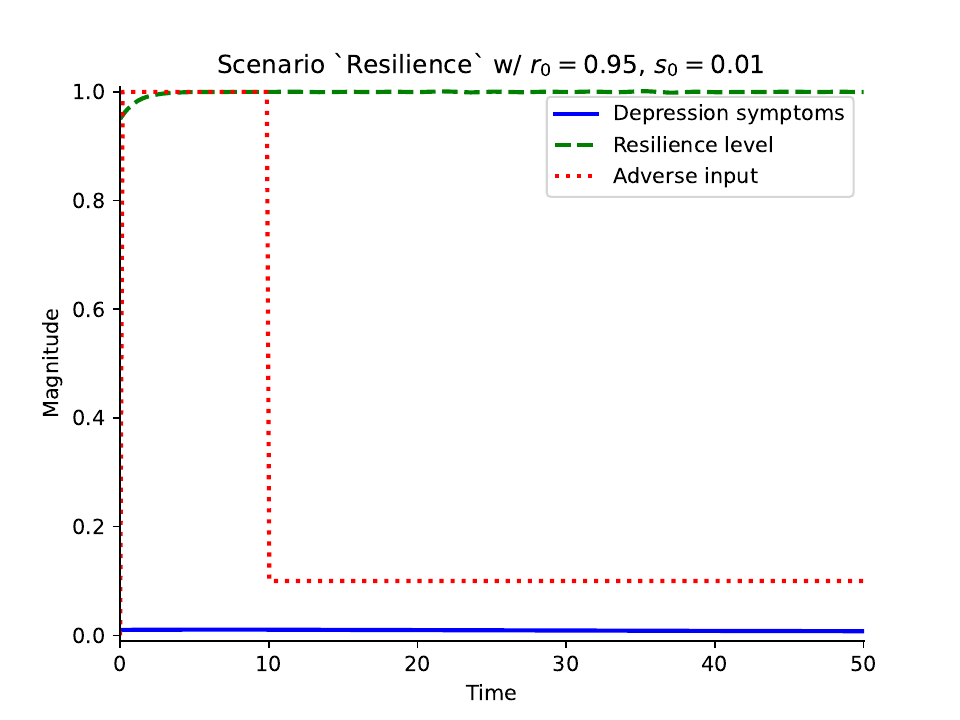}
  \caption{%
    \label{fig:resilience-scenario}%
    \textbf{Resilience scenario.}
    As in the chronic example, the adverse input, of
    intensity $1.0$ during $t\in[0,10]$, drops back to a persistent
    background adversity level of $0.1$ afterwards. The initial depression
    symptom level is near zero, however, the initial resilience level
    is near $1.0$. Despite the adverse input, the resilience level
    increases further and the depression symptom level remains near
    zero.}
\end{figure}

\subsubsection*{Improvement from pre-existing symptoms}

This example shows a patient with initially high depression symptoms
but also high resilience, who is not (or no longer) affected by
adversity, and whose depression symptoms thus dissipate with time,
cf.\ Fig~\ref{fig:pre-existing-symptom-improvement}.

\begin{figure}[htb]
  \centering
  \includegraphics[width=0.8\linewidth]{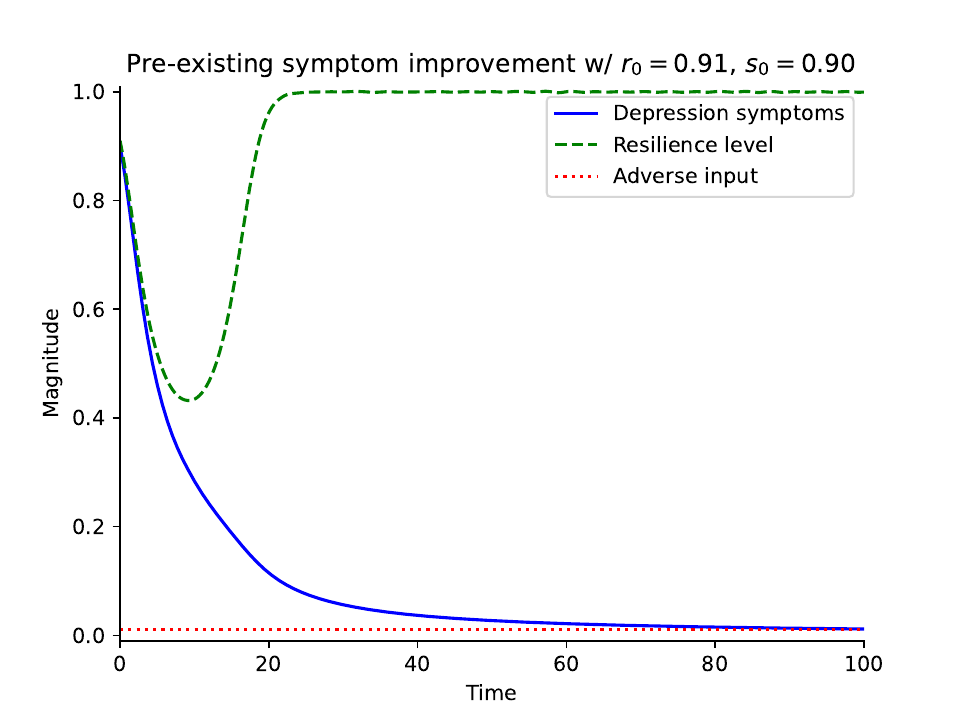}
  \caption{%
    \label{fig:pre-existing-symptom-improvement}%
    \textbf{Improvement from pre-existing symptoms scenario.}
    A patient with initially strong resilience but also strong
    depression symptoms may recover if no longer subjected to
    adversity.}
\end{figure}

\subsubsection*{Burnout from persistent low-grade adversity}

While in most of our examples the adverse input has the character of
an ``event'' that triggers the depression symptom to rise, depression
can also be a consequence of prolonged exposure to a seemingly small
adverse input. This is a feature commonly referred to as burnout,
which is usually interpreted as an accumulation of stress or adversity
over a long time horizon, depleting resilience and also causing a
depression, cf.\ Fig~\ref{fig:burnout}.

\begin{figure}[htb]
  \centering
  \includegraphics[width=0.8\linewidth]{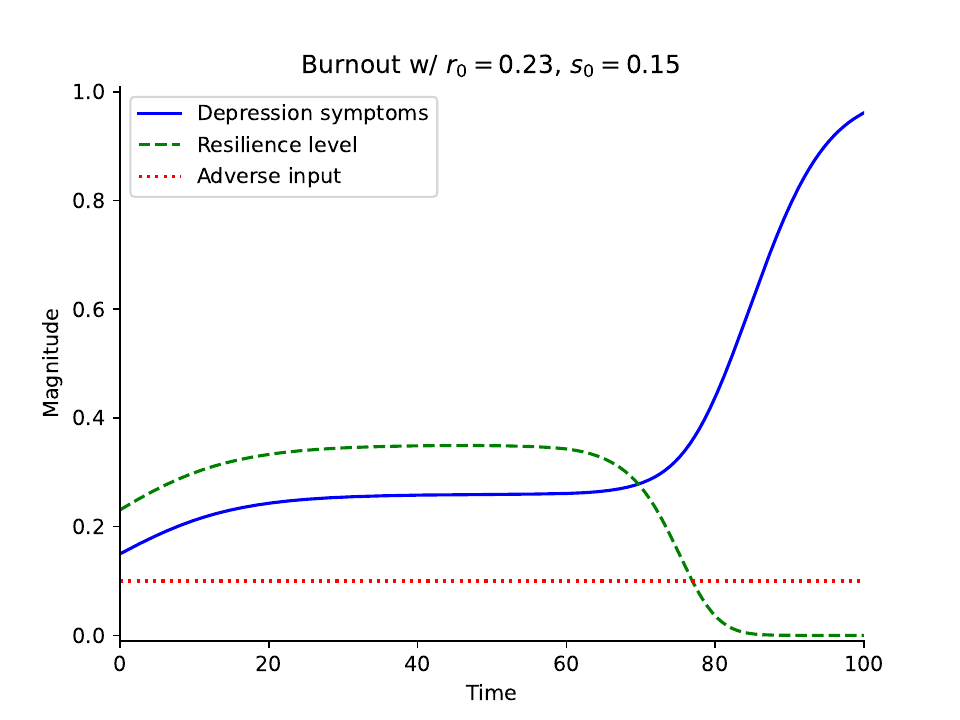}
  \caption{%
    \label{fig:burnout}%
    \textbf{Burnout from persistent low-grade adversity scenario.}
    This patient starts with a low level of depression symptoms while
    being subjected to an ongoing but low level of adversity. The
    initially low level of resilience increases during an initial
    phase, but with the growing depression symptom it eventually
    starts to decline, thereby in turn accelerating the descent into
    depression.  }
\end{figure}

\subsubsection*{Trigger events and relapse}

This example showcases a patient who is already depressed, but whose
symptoms seem to improve despite a level of background adversity. In
one scenario, the patient is subjected to a seemingly mild trigger
event, which, however, causes a relapse, cf.\
Fig~\ref{fig:trigger-relapse}.

\begin{figure}[htb]
  \centering
  \includegraphics[width=0.49\linewidth]{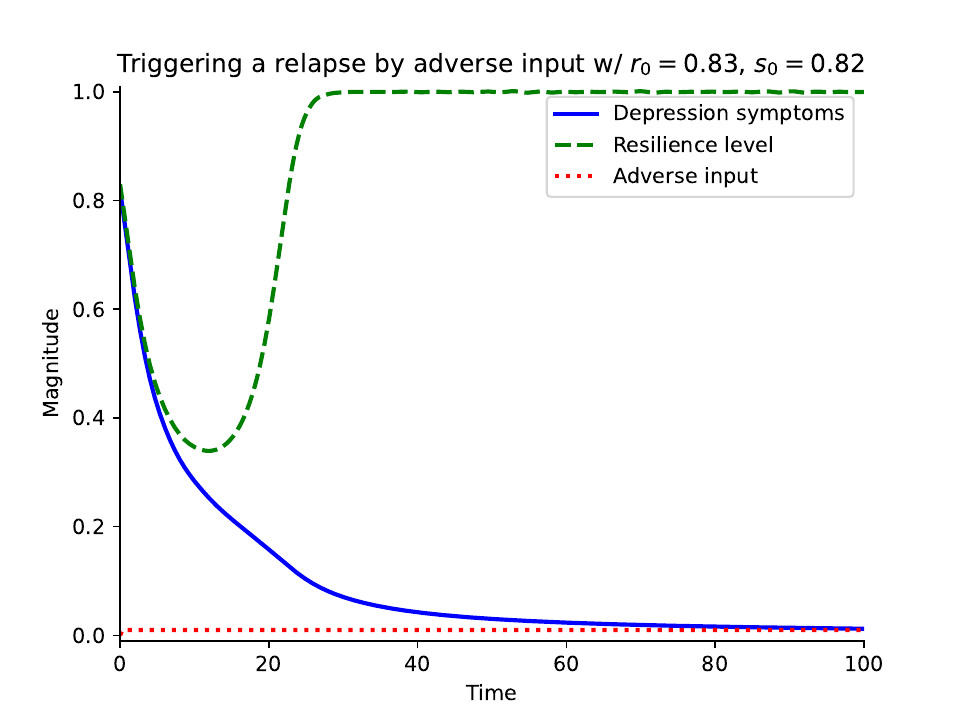}\hfil%
  \includegraphics[width=0.49\linewidth]{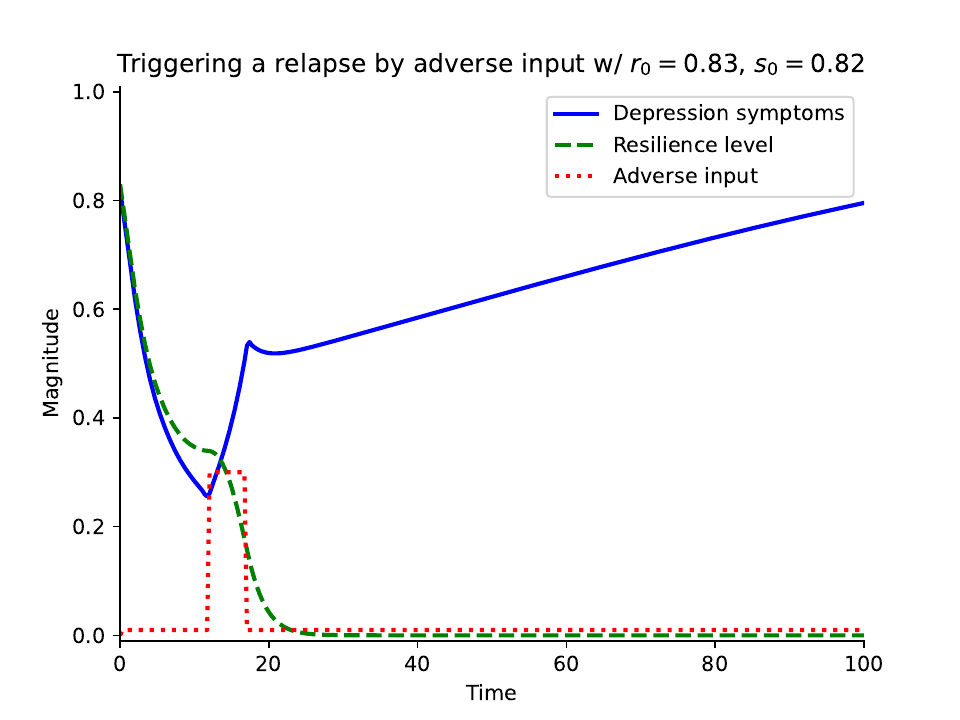}
  \caption{%
    \label{fig:trigger-relapse}%
    \textbf{Relapse scenario.}
    We see two almost identical scenarios of an initially recovering
    patient (until time $t=12$, that is). In one scenario the patient
    is subjected to an adverse trigger event between $t=12$ and $t=17$
    of intensity $0.3$, which causes relapse. At all other times there
    is an almost zero adversity level of $0.01$ affecting the patient.}
\end{figure}

\subsubsection*{Recurrent adversities triggering multiple depressive episodes}

Trigger events may recur, and anecdotal evidence available to the
authors suggests that depressive episodes come in waves, often
following trigger events, with symptoms improving in between. Such a scenario can be demonstrated for model~\eqref{eq:3-RS},
cf.\ Fig~\ref{fig:multiple-adversities-and-depression-episodes},
where a periodic adverse input has been used. It should be noted that
the input does not have to be periodic to observe similar improvement
and relapse periods.

\begin{figure}[htb]
  \centering
  \includegraphics[width=0.8\linewidth]{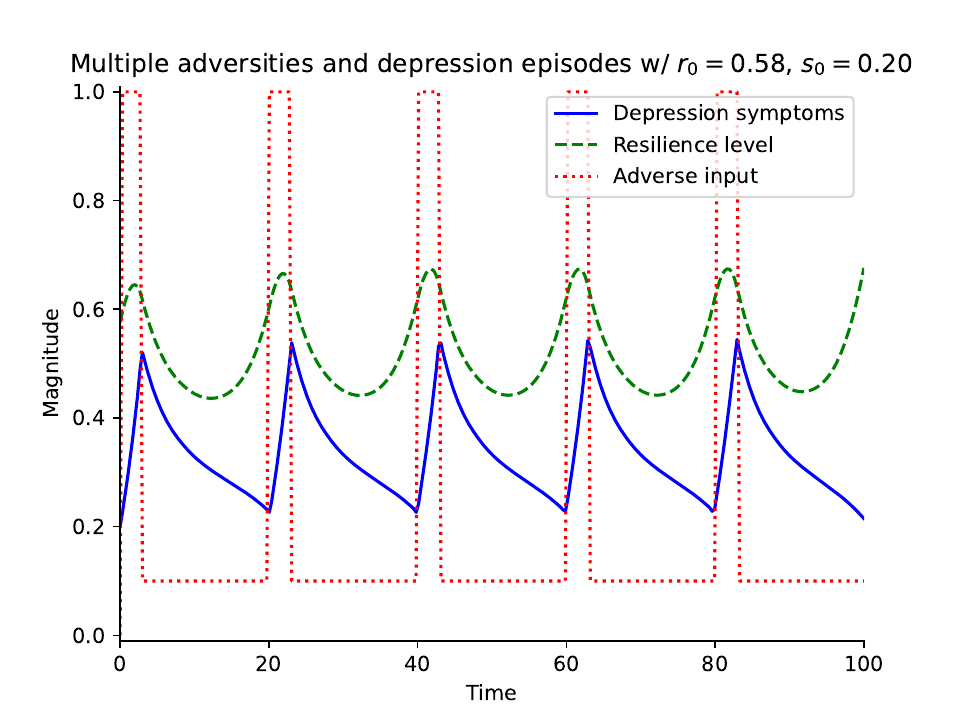}
  \caption{%
    \label{fig:multiple-adversities-and-depression-episodes}%
    \textbf{Recurrent adversities triggering multiple depressive episodes.}
    If adverse events followed by less stressful times are a recurrent
    theme, a patient's depression may worsen and improve with these
    phases. It should be noted that this oscillatory behaviour is
    unstable and a patient would most likely either recover or turn to
    a severe depression eventually.}
\end{figure}

\subsection*{Structural properties of the model}

\subsubsection*{Monotonicity of trajectories}

This example demonstrates the monotonicity properties of
system~\eqref{eq:3-RS}: Ordered initial conditions and inputs lead to
ordered trajectories, cf.\ Fig~\ref{fig:monotonicity}.

\begin{figure}[htb]
  \centering
  \includegraphics[width=0.8\linewidth]{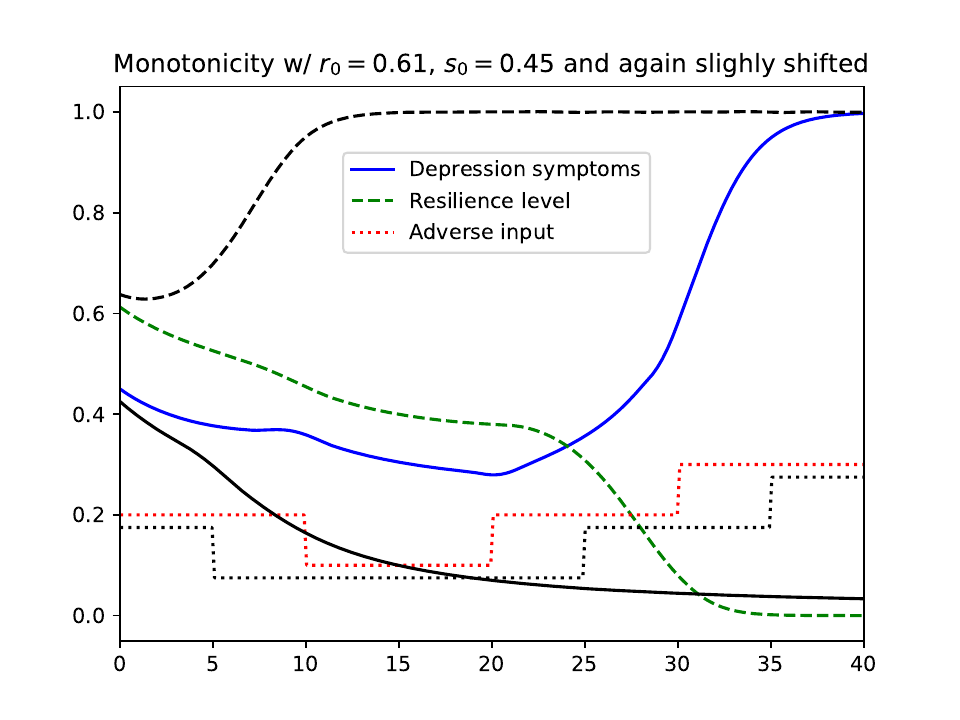}
  \caption{%
    \label{fig:monotonicity}%
    \textbf{Monotonicity of trajectories.}
    Two evolutions of the resilience level, symptom level, and adverse
    input are shown; one in color and one in black.  The black version
    has initial conditions that are ``not worse than'' those of the
    colored version, each being shifted by $0.025$ in the appropriate
    direction. Likewise, the adverse input in the black scenario is
    ``not worse than'' the respective input for the colored scenario,
    being (here even strictly) smaller at every time instant. The
    resulting $r(t)$, $s(t)$ trajectories stay ordered as well, that
    is, the resilience level of the black version stays above its
    colored counterpart, and the black symptom curve below the colored
    symptom curve.}
\end{figure}

\subsubsection*{The resilience level and fate}

Starting from the same depression symptom level and subjected to the
same adverse inputs, it is possible to ultimately reach bliss or
despair, depending on two different yet close initial resilience levels,
cf.\ Fig~\ref{fig:fate-by-resilience-level}.

\begin{figure}[htb]
  \centering
  \includegraphics[width=0.49\linewidth]{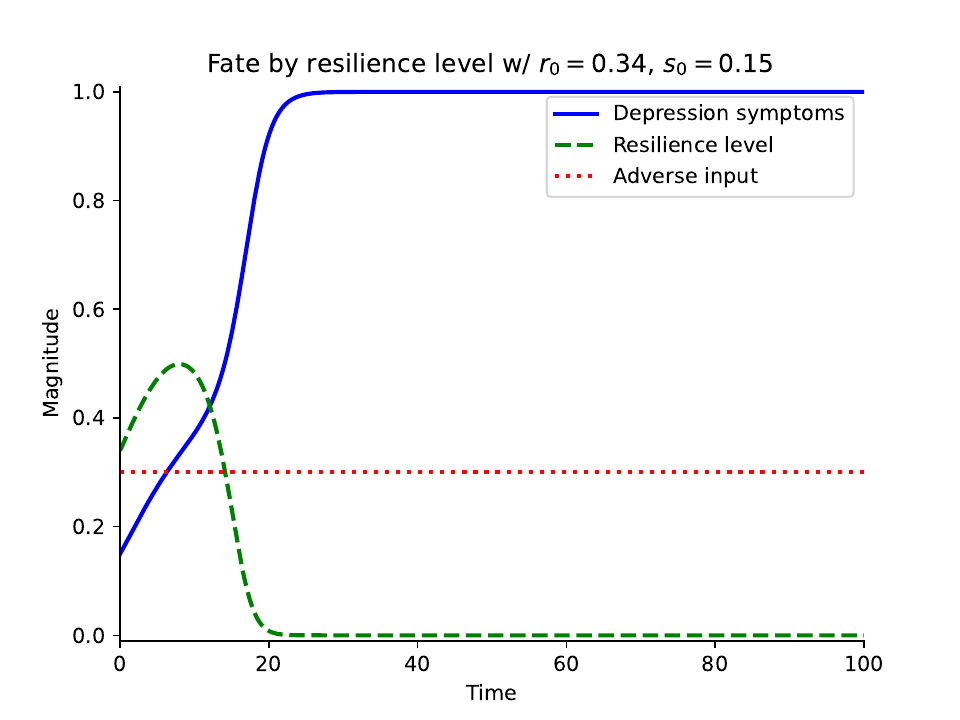}\hfil%
  \includegraphics[width=0.49\linewidth]{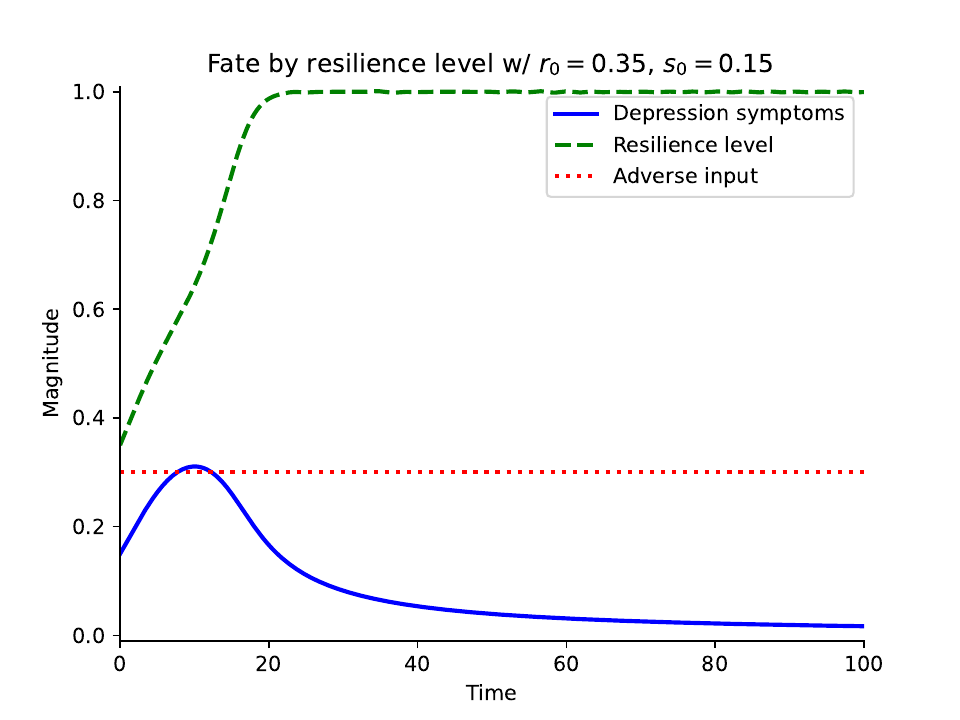}
  \caption{%
    \label{fig:fate-by-resilience-level}%
    \textbf{Resilience level and fate.}
    We see two trajectories starting at the same depression symptom
    level. Both are affected by the same adverse input (which is
    constant for simplicity). The only difference in the parameters is
    that one has a slightly higher initial resilience level than the
    other. It is this one that converges to bliss, while the other
    converges to despair.}
\end{figure}

\subsubsection*{The depression symptom level and fate}

Similar to the previous example, starting from the same resilience
level and subjected to the same adverse inputs, it is possible to
ultimately reach bliss or despair, depending on two different yet close
initial depression symptom levels, as is best seen using a phase plot,
cf.\ Fig~\ref{fig:fate-by-depression-symptom-level}.

\begin{figure}[htb]
  \centering
  \includegraphics[width=0.49\linewidth]{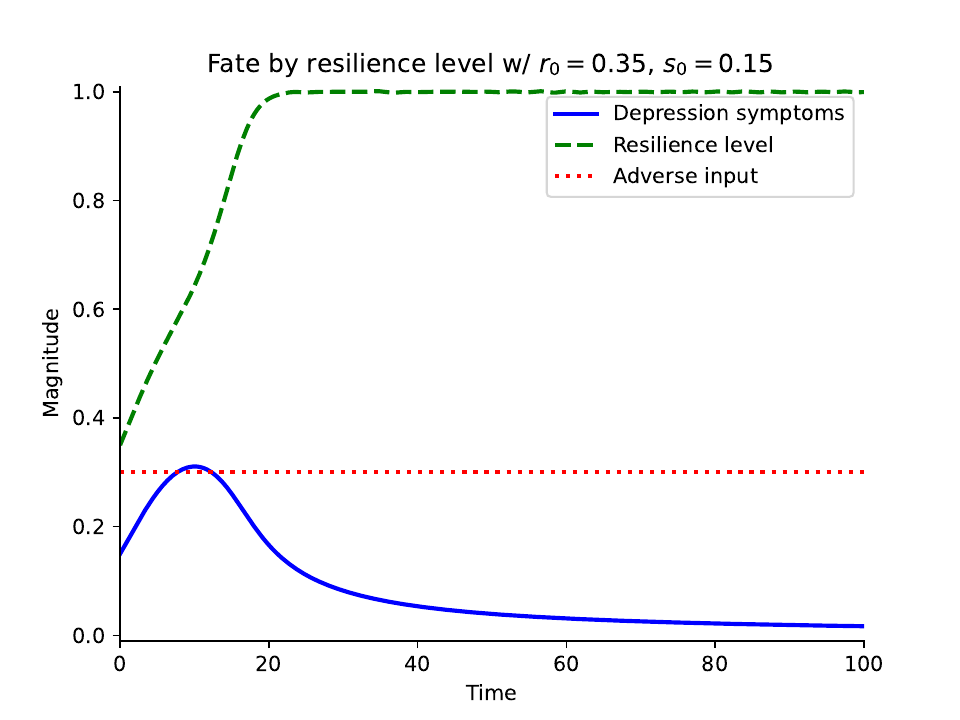}\hfil%
  \includegraphics[width=0.49\linewidth]{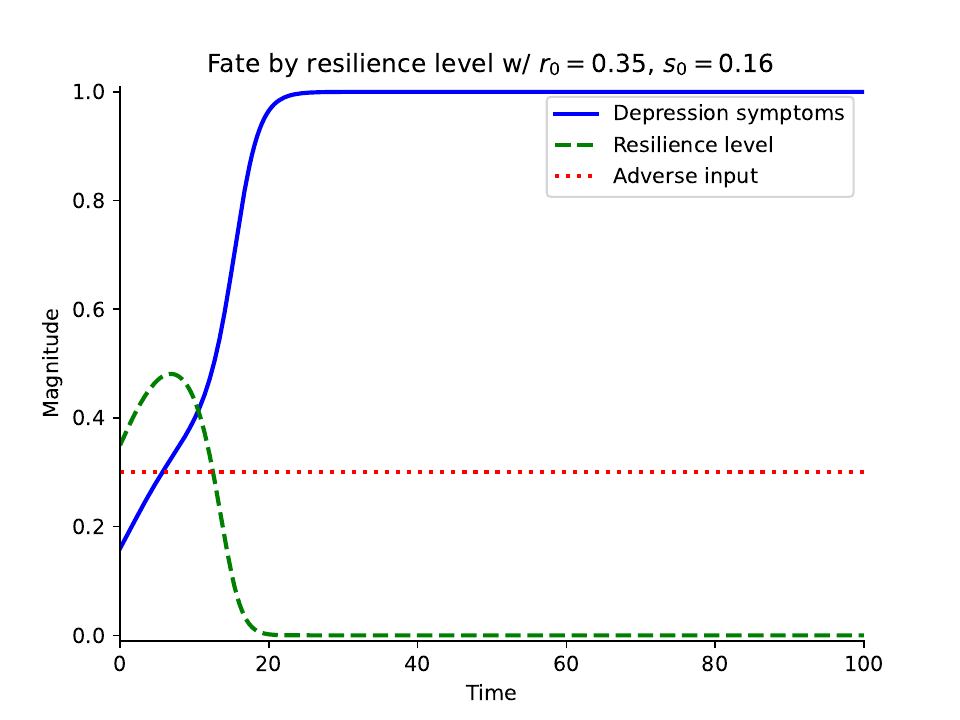}
  \caption{%
    \label{fig:fate-by-depression-symptom-level}%
    \textbf{Depression symptom level and fate.}
    We see two trajectories starting at the same resilience
    level. Both are affected by the same adverse input (which is
    constant for simplicity). The only difference in the parameters is
    that one has a slightly lower initial depression symptom than the
    other. It is this one that converges to bliss, while the other
    converges to despair.}
\end{figure}

\subsubsection*{Alternative pasts}

It is entirely possible that a patient with specific resilience and
symptom levels at a given time may have arrived at this state from
very different pasts, cf.\ Fig~\ref{fig:alternative-pasts}.
\begin{figure}[htb]
  \centering
  \includegraphics[width=0.8\linewidth]{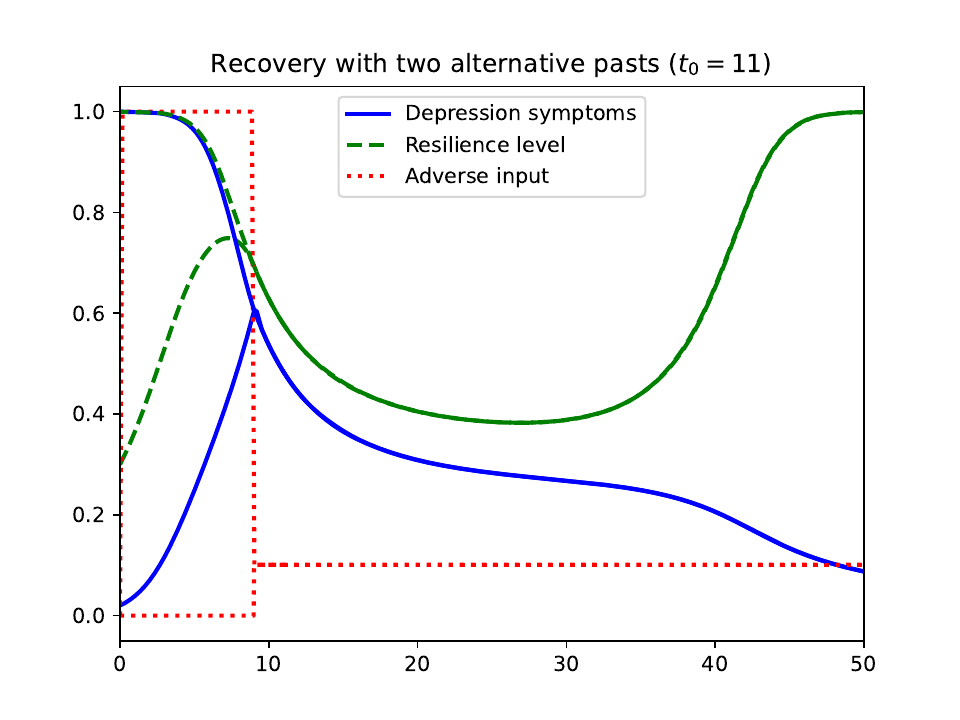}
  \caption{%
    \label{fig:alternative-pasts}%
    \textbf{Different pasts, same futures at $t=11$.}
    Different pasts can lead to the same futures. Here input,
    resilience, and symptom trajectories coincide for $t\geq 11$
    (consider this the initial time, not $t=0$). However, before this
    instant, in one possible past the patient started out quite
    resilient yet very depressed and was affected by no adverse
    input. In another possible past, the patient started with a
    resilience of about $0.3$, a negligible depression symptom, but
    was affected by adverse inputs of about $1.0$. In both cases the
    patient seems to recover from the depression and regain resilience
    over time, while remaining subjected to a small but consistent
    adverse input of about $0.1$.}
\end{figure}

\subsubsection*{Sustained low-grade symptoms}

The model allows for sustained low-grade symptoms in the presence of a
non-vanishing input. Numerically, this is a challenging scenario
though, as system~\eqref{eq:3-RS} does not admit any \emph{stable}
interior equilibria inside $[0,1]^{2}$, and computing a trajectory
forward in time that approaches an unstable equilibrium is extremely
sensitive to initial conditions, inputs, and the numerical integration
method. In practice, such a trajectory would be unsustainable for
longer periods of time with this model, and it would be more realistic
that a patient's depression eventually either worsens or improves,
or that some external feedback mechanism is at play that provides
stabilization. It is also unrealistic that an adverse input would
simply remain constant, and a state-based (read: feedback-controlled)
input might achieve a scenario similar to this one.

Due to these mathematical limitations, the present example has
been constructed by integrating the model backward in time, starting from
the equilibrium. By uniqueness of solutions, this trajectory is,
at least in theory, also obtainable by integrating forward in time,
cf.\ Fig~\ref{fig:sustained-low-grade-symptoms}.

\begin{figure}[htb]
  \centering
  \includegraphics[width=0.8\linewidth]{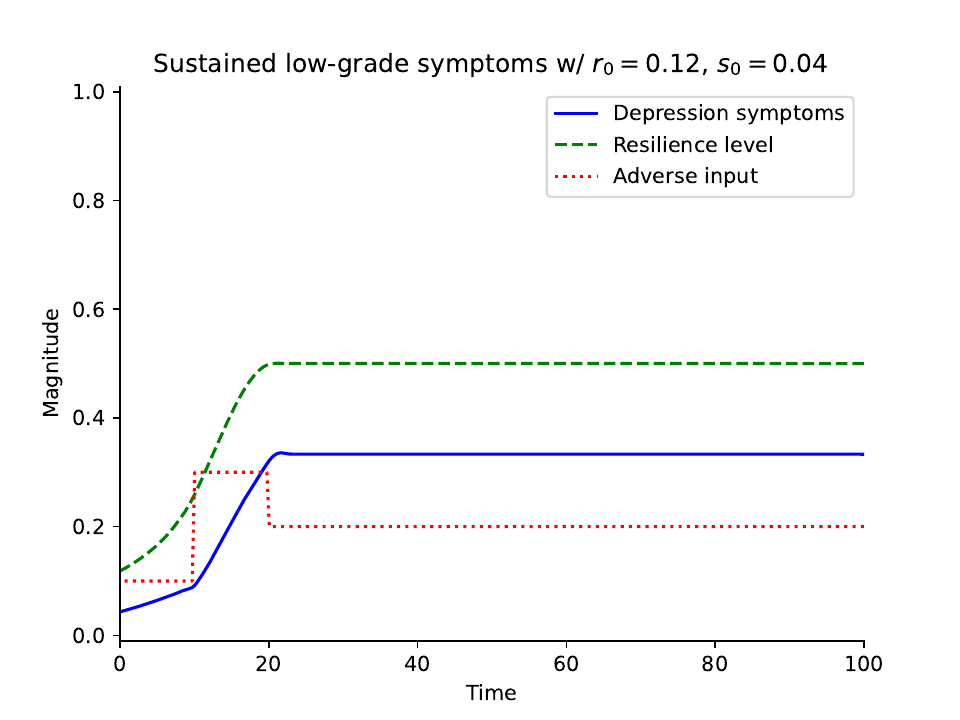}
  \caption{%
    \label{fig:sustained-low-grade-symptoms}%
    \textbf{Sustained low-grade symptoms: the interior equilibrium $p_{5}=p_{5}(e)$.}
    This figure demonstrates a patient approaching near constant
    but sustained symptoms, corresponding to the unstable equilibrium
    $p_{5}$. We note that for numerical reasons this plot has been
    created by integrating the system backward in time.}
\end{figure}

Another possibility to have constant, low-grade symptoms is via the
equilibria $P_{6}$ and $P_{7}$ for constant adverse inputs $e=0$
and, respectively, $e=1$, cf.\
Fig~\ref{fig:sustained-low-grade-symptoms-equilibrium}.

\begin{figure}[htb]
  \centering
  \includegraphics[width=0.49\linewidth]{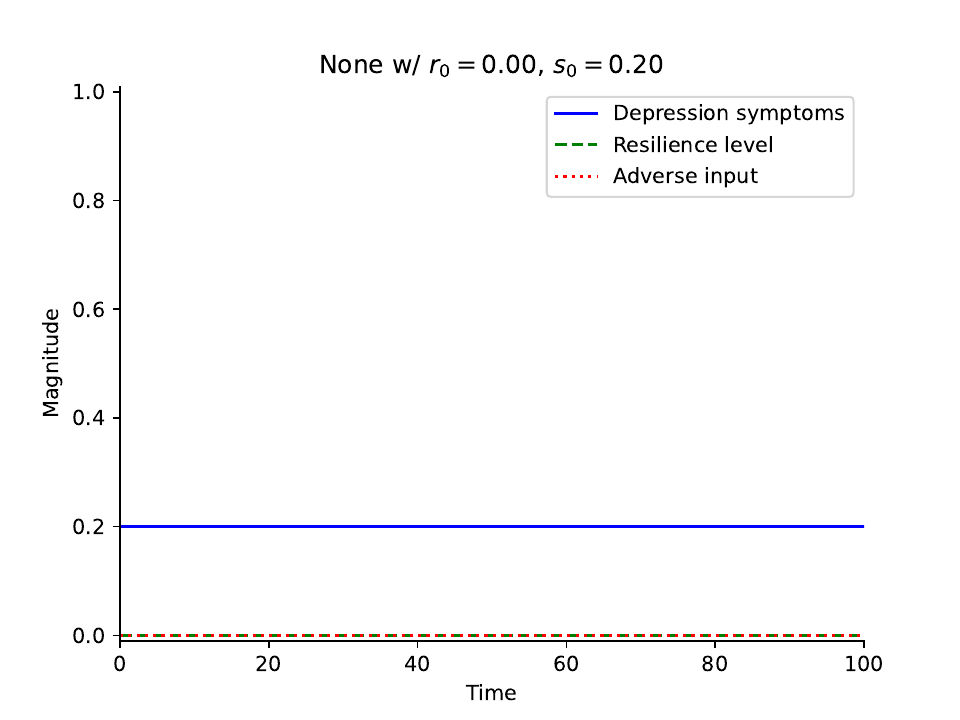}\hfil%
  \includegraphics[width=0.49\linewidth]{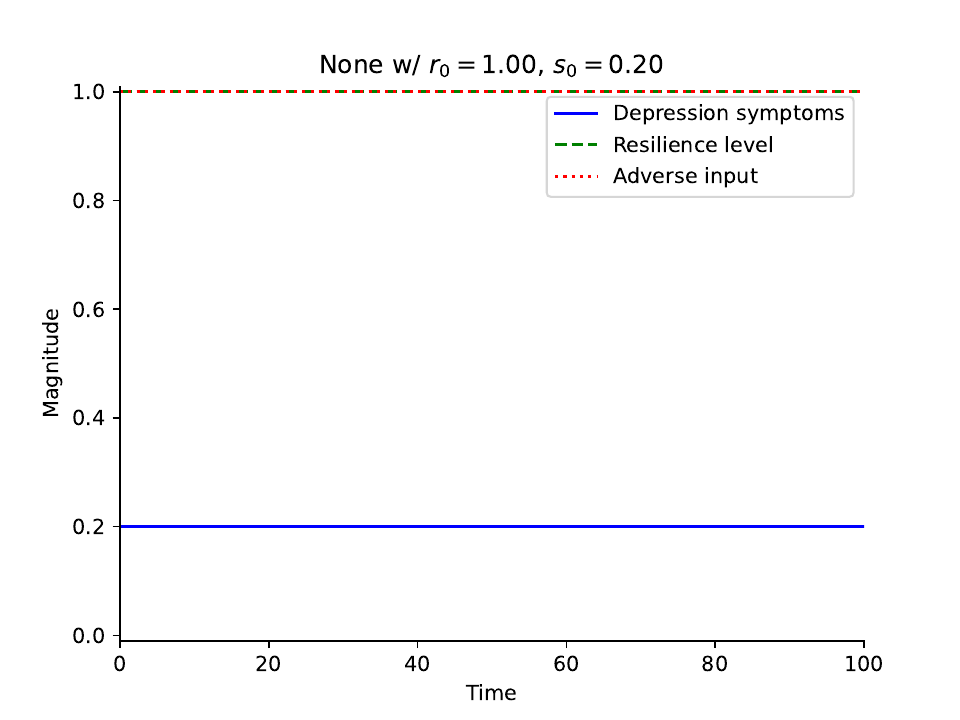}
  \caption{%
    \label{fig:sustained-low-grade-symptoms-equilibrium}%
    \textbf{Sustained low-grade symptoms: boundary equilibria $P_{6}$ and $P_{7}$.}
    These two plots showcase persistent symptom levels for constant
    inputs, corresponding to trajectories starting in the equilibrium
    sets $P_{6}$ with constant $e=0$ and $P_{7}$ with constant $e=1$.}
\end{figure}

\subsubsection*{The universal threshold $r=s$}

Following
Theorem~\ref{thm:stability}\ref{item:RS-upper-triangle-invariance-and-ROA}
we stated that patients cannot recover once the symptom level
reaches the resilience level, i.e., once $s\geq r$, as long as the
adverse input does not vanish. We demonstrate this with a highly
resilient patient whose symptom level, however, matches the resilience
level, $r_{0}=s_{0}=0.95$, and who is subjected to an almost
negligible constant adversity $e=0.01$: the patient will nevertheless
approach severe depression eventually, cf.\
Fig~\ref{fig:universal-threshold-at-r-s}.

\begin{figure}[htb]
  \centering
  \includegraphics[width=0.8\linewidth]{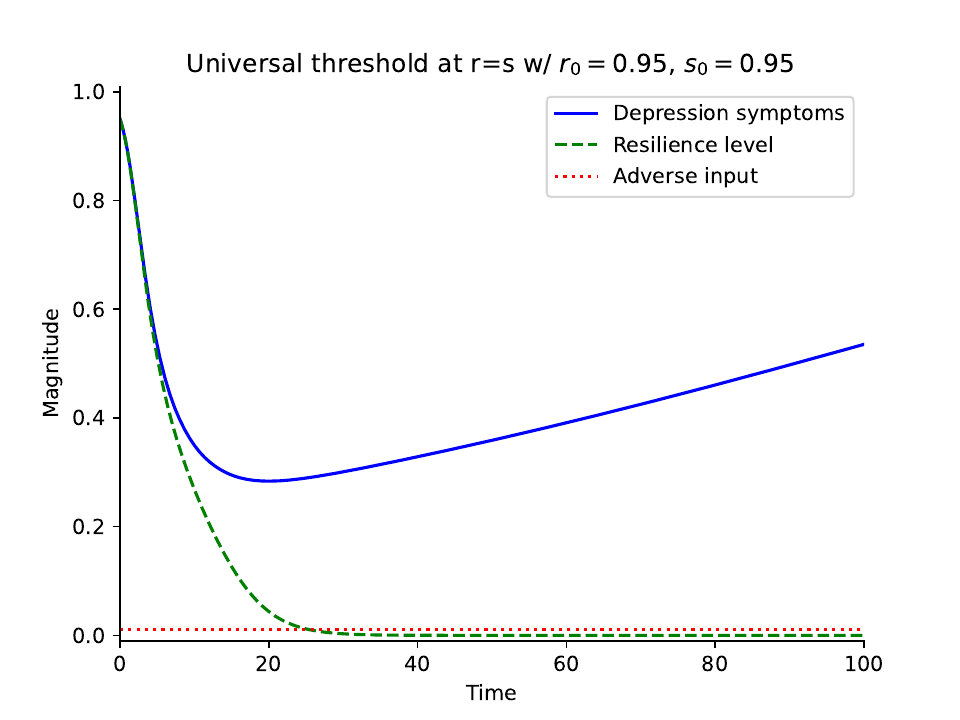}
  \caption{%
    \label{fig:universal-threshold-at-r-s}%
    \textbf{Universal threshold for depression onset at $r=s$.}
    If $s_{0}\geq r_{0}$ (here $r_{0}=s_{0}=0.95$), then by
    Theorem~\ref{thm:stability} any adverse input that does not vanish
    ---here the almost negligible constant $e=0.01$---drives the
    patient into despair: the resilience level depletes, and the
    symptom level, after an initial decline, slowly approaches its
    worst extreme.}
\end{figure}

\section*{Further experiments}

The reader is invited to experiment with these examples on their own
using an interactive simulation environment with preconfigured
scenarios, which the authors provide as open-source software under the
MIT license~\cite{rueffer-schoenlein-rsmodel-software}. It is
implemented as a \texttt{marimo} notebook for Python~3.12 or newer and
builds on the Python package \texttt{rsmodel}; the same repository also
contains the symbolic computations underpinning the results of this
article and the scripts that regenerate every figure in it. A version of
the notebook that runs without any installation in the user's web
browser is available at
\begin{center}
  \url{https://rsmodel.org}
\end{center}
(no technical knowledge required for basic usage). The corresponding
source code is archived under
\href{https://doi.org/10.5281/zenodo.22018288}{doi:10.5281/zenodo.22018288}
and developed at \url{https://github.com/bjoseru/rsmodel}.

\section*{Conclusions and outlook}

We provide a dynamic model for depression with two state variables
\textit{resilience} and \textit{symptom}. The model is a system of two
coupled ODEs. The right-hand side of this model is a polynomial of total degree
four (and of degree at most three in each variable). This basic mathematical model is able to recover a range of
features that seem compatible with anecdotal evidence observed in
individuals suffering from depression and with trajectories showcased
in the resilience research literature.

The model follows the paradigm of being as simple as possible, but not
simpler. It supports observations that a patient should continue to
take their medication despite already feeling better---as the internal
resilience/memory state may not have recovered yet, and as this may take
substantially longer. It also supports scenarios where
a patient is catapulted back into a severe depression by seemingly
small trigger events.

At the same time, the modeling process used here has made no attempt
to seek grounding in more elaborate models of schema therapy or causal
networks of signaling pathways in the brain as they are studied in
neuroscience. The objective here was merely to provide a model that
captures qualitative behavior, and we argue that this objective has
been achieved.

A number of further extensions are possible. On the mathematical side,
more analysis can be done, for example by computing regions of
attraction, by considering stability notions such as input-to-state
(dynamical) stability~\cite{isds}, or by designing observers for the
memory state using only measurements of inputs and possibly quantized
measurements of the symptom state. The model could further be
augmented to account for treatment options such as medication, or to
accommodate multiple symptoms and/or memory features, even though this
would be a departure from the minimalist aim expressed by the authors.

\def\cprime{$'$}


\begin{thebibliography}{10}

\bibitem{world-health-organization2023-depressive-disorder-depression}
{World Health Organization}.
\newblock Depressive disorder (depression) [Fact sheet]; 2025 [cited 2026
  August 11].
\newblock Available from:
  \url{https://www.who.int/news-room/fact-sheets/detail/depression}.

\bibitem{byrumahearnkrishnan1999-a-neuroanatomic-model-for-depression}
Byrum CE, Ahearn EP, Krishnan KRR.
\newblock A neuroanatomic model for depression.
\newblock Progress in Neuro-Psychopharmacology and Biological Psychiatry.
  1999;23(2):175-93.
\newblock \href {http://dx.doi.org/10.1016/S0278-5846(98)00106-7}
  {doi:10.1016/S0278-5846(98)00106-7}.

\bibitem{disnerbeevershaighbeck2011-neural-mechanisms-of-the-cognitive-model-of-depression}
Disner SG, Beevers CG, Haigh EAP, Beck AT.
\newblock Neural mechanisms of the cognitive model of depression.
\newblock Nature Reviews Neuroscience. 2011 Aug;12(8):467-77.
\newblock \href {http://dx.doi.org/10.1038/nrn3027} {doi:10.1038/nrn3027}.

\bibitem{demiccheng2014-modeling-the-dynamics-of-disease-states-in-depression}
Demic S, Cheng S.
\newblock Modeling the Dynamics of Disease States in Depression.
\newblock PLOS ONE. 2014 Oct;9(10):1-14.
\newblock \href {http://dx.doi.org/10.1371/journal.pone.0110358}
  {doi:10.1371/journal.pone.0110358}.

\bibitem{chengdorsognachou2020-mathematical-modeling-of-depressive-disorders:-circadian-driving-bistability-and-dynamical-transitions}
Cheng X, D'Orsogna MR, Chou T.
\newblock Mathematical modeling of depressive disorders: Circadian driving,
  bistability and dynamical transitions.
\newblock Computational and Structural Biotechnology Journal. 2021;19:664-90.
\newblock \href {http://dx.doi.org/10.1016/j.csbj.2020.10.035}
  {doi:10.1016/j.csbj.2020.10.035}.

\bibitem{bothhoogendoornkleintreur2008-modeling-the-dynamics-of-mood-and-depression}
Both F, Hoogendoorn M, Klein M, Treur J.
\newblock Modeling the Dynamics of Mood and Depression.
\newblock In: Proceedings of the 2008 Conference on ECAI 2008: 18th European
  Conference on Artificial Intelligence; 2008. p. 266-70.

\bibitem{tuckwellmiura1978-a-mathematical-model-for-spreading-cortical-depression}
Tuckwell HC, Miura RM.
\newblock A mathematical model for spreading cortical depression.
\newblock Biophysical Journal. 1978;23(2):257-76.
\newblock \href {http://dx.doi.org/10.1016/S0006-3495(78)85447-2}
  {doi:10.1016/S0006-3495(78)85447-2}.

\bibitem{liebrigottischafer2025-ausgebrannt-sein:-burnout-als-risikozustand}
Lieb K, Rigotti T, Sch\"afer S.
\newblock Ausgebrannt sein: Burnout als Risikozustand.
\newblock Forschung und {L}ehre. 2025;1$\vert$25:36-9.

\bibitem{arnoldschilbachrigotti2023-paradigmen-der-psychologischen-resilienzforschung}
Arnold M, Schilbach M, Rigotti T.
\newblock Paradigmen der psychologischen Resilienzforschung.
\newblock Psychologische {R}undschau. 2023;74(3):154-65.
\newblock \href {http://dx.doi.org/10.1026/0033-3042/a000627}
  {doi:10.1026/0033-3042/a000627}.

\bibitem{galatzer-levyhuangbonanno2018-trajectories-of-resilience-and-dysfunction-following-potential-trauma:-a-review-and-statistical-evaluation}
Galatzer-Levy IR, Huang SH, Bonanno GA.
\newblock Trajectories of resilience and dysfunction following potential
  trauma: A review and statistical evaluation.
\newblock Clinical Psychology Review. 2018;63:41-55.
\newblock \href {http://dx.doi.org/10.1016/j.cpr.2018.05.008}
  {doi:10.1016/j.cpr.2018.05.008}.

\bibitem{schaferkunzlerkalischtuscherlieb2022-trajectories-of-resilience-and-mental-distress-to-global-major-disruptions}
Sch\"afer SK, Kunzler AM, Kalisch R, T\"uscher O, Lieb K.
\newblock Trajectories of resilience and mental distress to global major
  disruptions.
\newblock Trends in Cognitive Sciences. 2022;26(12):1171-89.
\newblock \href {http://dx.doi.org/10.1016/j.tics.2022.09.017}
  {doi:10.1016/j.tics.2022.09.017}.

\bibitem{bonanno2004-loss-trauma-and-human-resilience:-have-we-underestimated-the-human-capacity-to-thrive-after-extremely-aversive-events}
Bonanno GA.
\newblock Loss, Trauma, and Human Resilience: Have We Underestimated the Human
  Capacity to Thrive after Extremely Aversive Events?
\newblock American Psychologist. 2004;59(1):20-8.
\newblock \href {http://dx.doi.org/10.1037/0003-066X.59.1.20}
  {doi:10.1037/0003-066X.59.1.20}.

\bibitem{becksteerbrownothers1996-manual-for-the-beck-depression-inventory-ii}
Beck AT, Steer RA, Brown GK, et~al.
\newblock Manual for the {B}eck depression inventory-{II}.
\newblock San Antonio, TX: Psychological Corporation; 1996.
\newblock \href {http://dx.doi.org/10.1037/t00742-000}
  {doi:10.1037/t00742-000}.

\bibitem{becksteer1988-bhs-beck-hopelessness-scale:-manual}
Beck AT, Steer RA.
\newblock {BHS}, {B}eck hopelessness scale: manual.
\newblock Psychological Corporation, San Antonio, TX; 1988.

\bibitem{karin-et-al2020-new-hpa-model-dysregulation}
Karin O, Raz M, Tendler A, Bar A, Korem~Kohanim Y, Milo T, et~al.
\newblock A new model for the HPA axis explains dysregulation of stress
  hormones on the timescale of weeks.
\newblock Molecular Systems Biology. 2020;16(7):e9510.
\newblock \href {http://dx.doi.org/10.15252/msb.20209510}
  {doi:10.15252/msb.20209510}.

\bibitem{kalischbakeral.2017-the-resilience-framework-as-a-strategy-to-combat-stress-related-disorders}
Kalisch R, Baker DG, Basten U, et~al.
\newblock The resilience framework as a strategy to combat stress-related
  disorders.
\newblock Nature Human Behaviour. 2017;1:784-90.
\newblock \href {http://dx.doi.org/10.1038/s41562-017-0200-8}
  {doi:10.1038/s41562-017-0200-8}.

\bibitem{smith1995-monotone-dynamical-systems}
Smith HL.
\newblock Monotone dynamical systems. vol.~41 of Mathematical Surveys and
  Monographs.
\newblock Providence, RI: American Mathematical Society; 1995.

\bibitem{angelisontag2003-monotone-control-systems}
Angeli D, Sontag ED.
\newblock Monotone control systems.
\newblock IEEE Transactions on Automatic Control. 2003;48(10):1684-98.
\newblock \href {http://dx.doi.org/10.1109/TAC.2003.817920}
  {doi:10.1109/TAC.2003.817920}.

\bibitem{khalil2002-nonlinear-systems}
Khalil HK.
\newblock Nonlinear systems.
\newblock Prentice Hall; 2002.

\bibitem{hinrichsenpritchard2005-mathematical-systems-theoryi---modelling-state-space-analysis-stability-and-robustness}
Hinrichsen D, Pritchard AJ.
\newblock Mathematical Systems Theory~I---Modelling, State Space Analysis,
  Stability and Robustness.
\newblock Berlin: Springer; 2005.
\newblock \href {http://dx.doi.org/10.1007/b137541} {doi:10.1007/b137541}.

\bibitem{Aman-1990-ODE-engl}
Amann H.
\newblock Ordinary Differential Equations---An Introduction to Nonlinear
  Analysis.
\newblock Berlin: Walter de Gruyter; 1990.
\newblock \href {http://dx.doi.org/10.1515/9783110853698}
  {doi:10.1515/9783110853698}.

\bibitem{HirschSmith2005-handbook}
Hirsch MW, Smith H.
\newblock Monotone Dynamical Systems.
\newblock In: Ca{\~n}ada A, Dr{\'a}bek P, Fonda A, editors. Handbook of
  Differential Equations: Ordinary Differential Equations. vol.~2. Amsterdam:
  Elsevier; 2005. p. 239-357.
\newblock \href {http://dx.doi.org/10.1016/S1874-5717(05)80006-9}
  {doi:10.1016/S1874-5717(05)80006-9}.

\bibitem{youngs.weishaar2003-schema-therapy:-a-practitioners-guide}
Young JE, Klosko JS, Weishaar ME.
\newblock Schema Therapy: A Practitioner's Guide.
\newblock Guilford Publications; 2003.

\bibitem{Segal1996}
Segal ZV, Williams JM, Teasdale JD, Gemar M.
\newblock A cognitive science perspective on kindling and episode sensitization
  in recurrent affective disorder.
\newblock Psychological Medicine. 1996;26(2):371-80.
\newblock \href {http://dx.doi.org/10.1017/S0033291700034760}
  {doi:10.1017/S0033291700034760}.

\bibitem{Post1992}
Post RM.
\newblock Transduction of psychosocial stress into the neurobiology of
  recurrent affective disorder.
\newblock American Journal of Psychiatry. 1992;149(8):999-1010.
\newblock \href {http://dx.doi.org/10.1176/ajp.149.8.999}
  {doi:10.1176/ajp.149.8.999}.

\bibitem{Monroe2005}
Monroe SM, Harkness KL.
\newblock Life stress, the ``kindling'' hypothesis, and the recurrence of
  depression: Considerations from a life stress perspective.
\newblock Psychological Review. 2005;112(2):417-45.
\newblock \href {http://dx.doi.org/10.1037/0033-295X.112.2.417}
  {doi:10.1037/0033-295X.112.2.417}.

\bibitem{McEwenStellar1993}
McEwen BS, Stellar E.
\newblock Stress and the individual: Mechanisms leading to disease.
\newblock Archives of Internal Medicine. 1993;153(18):2093-101.
\newblock \href {http://dx.doi.org/10.1001/archinte.1993.00410180039004}
  {doi:10.1001/archinte.1993.00410180039004}.

\bibitem{McEwen1998}
McEwen BS.
\newblock Stress, adaptation, and disease: Allostasis and allostatic load.
\newblock Annals of the New York Academy of Sciences. 1998;840:33-44.
\newblock \href {http://dx.doi.org/10.1111/j.1749-6632.1998.tb09546.x}
  {doi:10.1111/j.1749-6632.1998.tb09546.x}.

\bibitem{Cramer2016}
Cramer AOJ, van Borkulo CD, Giltay EJ, van~der Maas HLJ, Kendler KS, Scheffer
  M, et~al.
\newblock Major Depression as a Complex Dynamic System.
\newblock PLoS ONE. 2016;11(12):e0167490.
\newblock \href {http://dx.doi.org/10.1371/journal.pone.0167490}
  {doi:10.1371/journal.pone.0167490}.

\bibitem{Borsboom2017}
Borsboom D.
\newblock A Network Theory of Mental Disorders.
\newblock World Psychiatry. 2017;16(1):5-13.
\newblock \href {http://dx.doi.org/10.1002/wps.20375} {doi:10.1002/wps.20375}.

\bibitem{BorsboomCramer2013}
Borsboom D, Cramer AOJ.
\newblock Network Analysis: An Integrative Approach to the Structure of
  Psychopathology.
\newblock Annual Review of Clinical Psychology. 2013;9:91-121.
\newblock \href {http://dx.doi.org/10.1146/annurev-clinpsy-050212-185608}
  {doi:10.1146/annurev-clinpsy-050212-185608}.

\bibitem{bonannochengalatzer-levy2023-resilience-to-potential-trauma-and-adversity-through-regulatory-flexibility}
Bonanno GA, Chen S, Galatzer-Levy IR.
\newblock Resilience to potential trauma and adversity through regulatory
  flexibility.
\newblock Nature Reviews Psychology. 2023;2:663-75.
\newblock \href {http://dx.doi.org/10.1038/s44159-023-00233-5}
  {doi:10.1038/s44159-023-00233-5}.

\bibitem{rueffer-schoenlein-rsmodel-software}
R{\"u}ffer BS, Sch{\"o}nlein M. \texttt{rsmodel}: A Resilience--Symptom
  Dynamical Model of Depression; 2026.
\newblock Source code: \url{https://github.com/bjoseru/rsmodel}.
\newblock Software, Zenodo.
\newblock Available from: \url{https://rsmodel.org} [cited 2026 August 19].
  \href {http://dx.doi.org/10.5281/zenodo.22018288}
  {doi:10.5281/zenodo.22018288}.

\bibitem{isds}
Gr\"une L.
\newblock Input-to-state dynamical stability and its Lyapunov function
  characterization.
\newblock {IEEE} {T}ransactions on {A}utomatic {C}ontrol. 2002;47(9):1499-504.
\newblock \href {http://dx.doi.org/10.1109/TAC.2002.802761}
  {doi:10.1109/TAC.2002.802761}.

\end{thebibliography}
\end{document}